\newcommand{\spazio}{\ensuremath{\overline{H}\textsuperscript{\,$1$}}}
\begin{document}

\newtheorem{lem}{Lemma}[section]
\newtheorem{prop}[lem]{Proposition}
\newtheorem{thm}[lem]{Theorem}
\newtheorem{rem}[lem]{Remark}
\newtheorem{cor}[lem]{Corollary}
\newtheorem{defn}[lem]{Definition}

\title[The Mean Field Equation on Compact Surfaces] {An Existence Result for the Mean Field Equation on Compact Surfaces in a Doubly Supercritical Regime}

\author{Aleks Jevnikar}

\address{SISSA, via Bonomea 265, 34136 Trieste (Italy)}

\email{ajevnika@sissa.it}

\begin{abstract}
We consider a class of variational equations with exponential nonlinearities on a compact Riemannian surface, describing the mean field equation of the equilibrium turbulance with arbitrarily signed vortices. For the first time, we consider the problem with both supercritical parameters and we give an existence result by using variational methods. In doing this, we present a new
Moser-Trudinger type inequality under suitable conditions on the center of mass and the scale of concentration of both $e^u$ and $e^{-u}$, where $u$ is the unknown function in the equation. 
\end{abstract}

\maketitle

\section{Introduction}

In this paper we consider the equation
\begin{equation}
-\Delta_g u = \rho_1 \left(\frac{h_1(x)\,e^u}{\int_\Sigma h_1(x)\,e^u \,dV_g} - \frac{1}{|\Sigma|}\right) - \rho_2 \left(\frac{h_2(x)\,e^{-u}}{\int_\Sigma h_2(x)\,e^{-u} \,dV_g} - \frac{1}{|\Sigma|} \right) \hspace{0.3cm} \mbox{on $\Sigma$}, \label{eq}
\end{equation}
where $\rho_1, \rho_2$ are two non-negative parameters, $h_1,h_2:\Sigma \rightarrow \mathbb{R}$ are two smooth positive functions and $\Sigma$ is a compact orientable surface without boundary with Riemannian metric $g$ and volume $|\Sigma|$. 

This equation arises in mathematical physics as a mean field equation of the equilibrium turbulance with arbitrarily signed vortices, and is obtained by Joyce and Montgomery \cite{joy-mont} and  by Pointin and Lundgren \cite{point-lund} from different statistical arguments. Later, many authors worked on this model, see for example \cite{cho,lio,new,ohtsuka-suzuki} and the references therein.

Equation \eqref{eq} has a variational structure and solutions can be found as critical points of the functional
\begin{eqnarray} \label{func}
I_{\rho_1,\rho_2}(u) &=& \frac{1}{2}\int_\Sigma |\nabla_g u|^2 \,dV_g - \rho_1 \log\int_\Sigma h_1(x)\,e^u \,dV_g  - \rho_2\log\int_\Sigma h_2(x)\,e^{-u} \,dV_g+  \nonumber\\
                   & & +\,\, \rho_1\int_\Sigma u \,dV_g - \rho_2\int_\Sigma u \,dV_g , \quad u\in H^1(\Sigma),
\end{eqnarray}
where we have normalized the volume $|\Sigma|$ of $\Sigma$ by $|\Sigma|=1.$ The structure of the functional $I_{\rho_1,\rho_2}$ strongly depends on the parameters $\rho_1,\rho_2$. A Moser-Trudinger type inequality relative to this functional was proved in \cite{ohtsuka-suzuki}, and one has that
\[ 
\log\int_\Sigma e^{u-\bar{u}} \,dV_g  + \log\int_\Sigma e^{-u+\bar{u}} \,dV_g \leq \frac{1}{16\pi}\int_\Sigma |\nabla_g u|^2 \,dV_g + C_\Sigma, 
\]
where $\bar{u}$ denotes the average of $u$. By the above inequality, if we consider the case $(\rho_1,\rho_2)\in (0,8\pi)\times(0,8\pi)$, the functional $I_{\rho_1,\rho_2}$ is bounded from below and coercive, hence solutions can be found as global minima.

\smallskip

The value $8\pi$, or more in general $8\pi\mathbb{N}$, are \emph{critical} and the existence problem becomes subtler due to a loss of compactness. Even in the case $\rho_2=0$, namely the Liouville-type problem
\begin{equation}  
-\Delta_g u = \rho\left(\frac{h(x)\,e^u}{\int_\Sigma {h(x)\,e^u \,dV_g}} - \frac{1}{|\Sigma|}  \right) \hspace{0.3cm} \mbox{on $\Sigma$,} 
\label{equa1}\end{equation}
the existence problem is a difficult one, see \cite{cl,djlw1,nt}. To solve equation \eqref{eq} (or equation \eqref{equa1}) in this critical case, one always needs geometry conditions, see \cite{djlw1,zhou1}. For example, for equation \eqref{eq} with $\rho_1=8\pi$ and $\rho_2\in(0,8\pi]$, in \cite{zhou1} the author gave an existence result under suitable conditions on the Gaussian curvature $K(x)$ of $\Sigma$, namely $K(x)$ should satisfy
\[
8\pi-\rho_2-2K(x)>0 \qquad \mbox{for $x\in\Sigma$}.
\]

If $\rho_i>8\pi$ for some $i=1, 2$, then $I_{\rho_1,\rho_2}$ is unbounded
from below and a minimization technique is no more possible. In general, one needs to apply variational methods to obtain existence of critical points
(generally of saddle type) for $I_{\rho_1,\rho_2}$.

\smallskip

The case with $\rho_2=0$ (for instance equation \eqref{equa1}) has been very much studied in the literature. Again the problem has a variational structure and the associated functional is given by
\[ 
I_\rho(u) = \frac{1}{2}\int_\Sigma |\nabla_g u|^2 \,dV_g + \rho\int_\Sigma u \,dV_g -\rho\log \int_\Sigma h(x)\,e^u \,dV_g. 
\] 
There are by now many results regarding existence, compactness of solutions, bubbling behavior, etc, see \cite{djlw,djadli,mal,tar}. In particular, we have existence of solutions for equation \eqref{equa1} for $\rho\in(8k\pi,8(k+1)\pi)$ with $k\geq 1$, see for example \cite{mal}. This existence result is based on a detailed study of the topology of large negative sublevels of the functional $I_\rho$. It is indeed possible to find a homotopy equivalence between these sublevels and the so called \emph{space of formal baricentres} $\Sigma_k$, namely the family of elements $\sum_{i=1}^k t_i\delta_{x_i}$ with $(x_i)_i\subset\Sigma$ and $\sum_{i=1}^k t_i=1$, $t_i\geq0$. Exploiting the fact that the set $\Sigma_k$ is non contractible, it is then possible to introduce a min-max scheme based on this set.

\smallskip

On the other hand, in the case when $\rho_2\neq0$ and $\rho_i>8\pi$ for some $i=1, 2$, there are very few results. Here we point out some of them. The first is given in \cite{jwyz} and concerns with the case $\rho_1\in(8\pi,16\pi)$ and $\rho_2<8\pi$. Via a blow up analysis the authors proved existence of solutions for equation \eqref{eq} on a smooth, bounded, non simply-connected domain $\Sigma$ in $\mathbb{R}^2$ with homogeneous Dirichlet boundary condition. Later, in \cite{zhou} the author generalized this result to any compact surface without boundary by using analogous variational methods as those employed in the study of the problem \eqref{equa1}. In a certain sense, one can describe the topology of negative sublevels of the functional $I_{\rho_1,\rho_2}$ from the behaviour of the function $e^u$.

The blow up behaviour of solutions of equation \eqref{eq} is not yet developed in full generality. However, as in the case for $\rho_2=0$, in \cite{jwyz} the authors exhibited a volume quantization. More precisely, they proved that the blow up values are multiples of $8\pi$ (see the proof of Theorem \ref{thm-comp} for the definition of the blow up value). About this problem, by using a local quantization proved in \cite{ohtsuka-suzuki}, in Section 2 we deduce a global one for the case when $\rho_1,\rho_2\in(8\pi,16\pi)$.

\smallskip

We then turn to the existence issue and via a min-max scheme we obtain a positive result without any geometry and topology conditions. Our main theorem is the following:

\begin{thm}\label{main}
Assume that $\rho_1,\rho_2\in(8\pi,16\pi)$. Then there exists a solution to equation \eqref{eq}.
\end{thm}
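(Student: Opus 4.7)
The plan is to obtain solutions of \eqref{eq} as saddle-type critical points of $I_{\rho_1,\rho_2}$ via a min-max scheme, following the variational strategy developed in \cite{mal} for the Liouville case and extended in \cite{zhou} to the asymmetric regime $\rho_2<8\pi$. The argument rests on three pillars: an \emph{improved} Moser-Trudinger inequality tailored to the doubly supercritical situation $\rho_1,\rho_2\in(8\pi,16\pi)$; a topological description of the very negative sublevels of $I_{\rho_1,\rho_2}$; and a compactness statement for solutions based on the global quantization proved in Section 2.

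The first step is to strengthen the Ohtsuka-Suzuki inequality under spreading hypotheses on the normalized densities $\mu_+ = h_1 e^u /\int h_1 e^u \,dV_g$ and $\mu_- = h_2 e^{-u} /\int h_2 e^{-u} \,dV_g$. I expect to prove that if either $\mu_+$ or $\mu_-$ fails to be $(\varepsilon,r)$-concentrated near a single point of $\Sigma$ (in the sense of a simultaneous control on center of mass and scale), then the coefficient $1/(16\pi)$ on the right-hand side can be improved to some $1/(16\pi+\delta)$ on the corresponding term. Since $\rho_1,\rho_2<16\pi$, this pushes $I_{\rho_1,\rho_2}$ back to being bounded from below on such configurations. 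By a covering/dichotomy argument in the spirit of \cite{djadli}, it follows that for every $L>0$ sufficiently large, on $\{I_{\rho_1,\rho_2}\leq -L\}$ both $\mu_+$ and $\mu_-$ are forced to concentrate near (possibly coinciding) single points $x_+,x_-\in\Sigma$.

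This dichotomy provides, for $L$ large, a continuous barycentric projection $\psi\colon \{I_{\rho_1,\rho_2}\leq-L\}\to \Sigma\times\Sigma$, $u\mapsto (x_+,x_-)$. Conversely, using test families of the form $\varphi_{\lambda,x_+}-\varphi_{\lambda,x_-}$ built from bubbles concentrating at $x_+$ for $e^u$ and at $x_-$ for $e^{-u}$, one constructs for $\lambda$ large a continuous map $\Phi\colon \Sigma\times\Sigma \to \{I_{\rho_1,\rho_2}\leq -L\}$ with $\psi\circ\Phi$ homotopic to the identity. Letting $\lambda$ range in $[1,+\infty]$ then extends $\Phi$ to the cone $C(\Sigma\times\Sigma)$, with values in a high sublevel $\{I_{\rho_1,\rho_2}\leq M\}$. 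Since $H_*(\Sigma\times\Sigma)\neq 0$, the image $\Phi(\Sigma\times\Sigma)$ is not contractible inside $\{I_{\rho_1,\rho_2}\leq-L\}$, and the standard cone-type min-max
\[
\overline{c}(\rho_1,\rho_2) \;=\; \inf_{\pi\in\Pi}\,\max_{\xi\in C(\Sigma\times\Sigma)}\,I_{\rho_1,\rho_2}\!\bigl(\pi(\xi)\bigr),\qquad \Pi=\{\pi\in C^0(C(\Sigma\times\Sigma);H^1(\Sigma))\;:\;\pi|_{\Sigma\times\Sigma}=\Phi\},
\]
yields a finite critical level $\overline{c}(\rho_1,\rho_2)\in(-L,M]$.

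The principal obstacle is the possible failure of the Palais-Smale condition at $\overline{c}(\rho_1,\rho_2)$. I would circumvent it via Struwe's monotonicity trick: embedding $(\rho_1,\rho_2)$ in a two-parameter family and exploiting the monotonicity of a suitably rescaled min-max level in each variable, one obtains bounded almost-critical sequences and hence genuine solutions $u_n$ of perturbed equations with parameters $(\rho_1^n,\rho_2^n)\to (\rho_1,\rho_2)$. Compactness of $\{u_n\}$ then follows from the global $8\pi\mathbb{N}\times 8\pi\mathbb{N}$ quantization established in Section 2: since $\rho_1,\rho_2\in (8\pi,16\pi)$ are strictly non-critical, no blow-up can occur and $u_n$ converges to a solution of \eqref{eq}.
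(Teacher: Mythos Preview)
Your outline has the right overall architecture (improved inequality $\Rightarrow$ topological description of low sublevels $\Rightarrow$ test functions $\Rightarrow$ min-max $\Rightarrow$ Struwe monotonicity $\Rightarrow$ compactness), but the heart of the argument is missing and the scheme as written does not close. The one-sided spreading improvement you invoke is not strong enough in the doubly supercritical regime: if only $\mu_+$ spreads over two regions one obtains (see \cite{zhou}) $2\log\int e^u+\log\int e^{-u}\leq \frac{1}{16\pi-\varepsilon}\int|\nabla u|^2+C$, which yields a lower bound for $I_{\rho_1,\rho_2}$ only when $\rho_2<8\pi$. In fact the correct consequence of a Chen--Li type improvement here (Proposition \ref{mt-imp}) is that on very negative sublevels \emph{at least one} of $\mu_\pm$ is concentrated near a single point, not both; hence your projection $\psi:\{I_{\rho_1,\rho_2}\leq -L\}\to\Sigma\times\Sigma$ is not well defined. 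Even if it were, your test family $\varphi_{\lambda,x_+}-\varphi_{\lambda,x_-}$ degenerates to $0$ on the diagonal $\{x_+=x_-\}$, so $\Phi$ cannot land in $\{I_{\rho_1,\rho_2}\leq-L\}$ there, and the homotopy $\psi\circ\Phi\simeq\mathrm{id}$ breaks down precisely on $D\subset\Sigma\times\Sigma$.

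The missing idea, which is the main novelty of the paper, is a \emph{scale-aware} improved inequality (Proposition \ref{mt-improved}): if $e^u$ and $e^{-u}$ concentrate at the same point \emph{and at the same scale} (in the sense of the map $\psi$ of Proposition \ref{conc}), then $I_{\rho_1,\rho_2}$ is bounded below. This forces the natural target to be $X=(\overline{\Sigma}_\delta\times\overline{\Sigma}_\delta)\setminus\overline{D}_\delta$, the product of cones with the diagonal removed, rather than $\Sigma\times\Sigma$; the cone coordinate records the scale and the identification $\sigma\geq\delta$ handles the case where a component is \emph{not} concentrated. The test functions \eqref{test} are then parametrized by (a compact retract $\mathcal{X}_\nu$ of) $X$ and encode two \emph{independent} scales $t_1,t_2$, which is exactly what allows them to stay in low sublevels even when $x_1=x_2$. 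With this in place, the min-max runs over the cone $\overline{\mathcal{X}}_\nu$, noncontractibility of $X$ replaces your $H_*(\Sigma\times\Sigma)\neq0$, and the remaining steps (Struwe monotonicity and the compactness of Theorem \ref{thm-comp}) are as you describe.
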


The method to prove this existence result relies on a min-max scheme introduced by Malchiodi and Ruiz in \cite{mal-ruiz} for the study of Toda systems. Such a scheme is based on study of the topological properties of the low sublevels of $I_{\rho_1,\rho_2}$.

We shall see that on low sublevels of $I_{\rho_1,\rho_2}$ at least one of the functions $e^u$ or $e^{-u}$ is very concentrated around some point of $\Sigma$. Moreover, both $e^u$ and $e^{-u}$ can concentrate at two points that could eventually coincide, but in this case the \emph{scale of concentration} must be different. Roughly speaking, if $e^u$ and $e^{-u}$ concentrate around the same point at the same rate, then $I_{\rho_1,\rho_2}$ is bounded from below. We next make this statement more formal.

First, following the argument in \cite{mal-ruiz}, we define a continuous \emph{rate of concentration} $\sigma=\sigma(f)$ of a positive function $f\in\Sigma$, normalized in $L^1$. Somehow the smaller is $\sigma$, the
higher is the rate of concentration of $f$. Moreover we define a continuous \emph{center of mass} $\beta=\beta(f)\in\Sigma$. This can be done when $\sigma\leq\delta$ for some fixed $\delta$, therefore we have a map $\psi:H^1(\Sigma) \rightarrow \overline{\Sigma}_\delta$,
\[
\psi(u)= \bigr(\beta(f_1), \sigma(f_1)\bigr), \quad \psi(-u)= \bigr(\beta(f_2), \sigma(f_2)\bigr),
\]
where we have set
\[
f_1=\frac{e^{u}}{\int_{\Sigma} e^{u}\, dV_g}\,, \quad f_2=\frac{e^{-u}}{\int_{\Sigma} e^{-u}\, dV_g}.
\]
Here $\overline{\Sigma}_{\delta}$ is the topological cone over $\Sigma$, where we make the identification to a point when $\sigma \geq \delta $ for some $\delta>0$ fixed, see \eqref{cone}.

The improvement of the Moser-Trudinger inequality discussed above is made rigorous in the following way: if $ \psi(f_1) = \psi(f_2)$, then $I_{\rho_1,\rho_2}(u)$ is bounded from below, see Proposition \ref{mt-improved}. The proof is based on local versions of the Moser-Trudinger inequality on small balls and on annuli with small internal radius. We point out that our improved inequality is scaling invariant, differently from those proved by Chen-Li and Zhou (see \cite{chen-li} and \cite{zhou}).  

Using this fact, for $L>0$ large we can introduce a continuous map:
\[ 
I_{\rho_1,\rho_2}^{-L} \quad \xrightarrow{(\psi,\psi)}
\quad X:=\bigr(\overline{\Sigma}_{\delta} \times \overline{\Sigma}_{\delta}\bigr) \setminus \overline{D},
\]
where $\overline{D}$ is the diagonal of $\overline{\Sigma}_{\delta} \times
\overline{\Sigma}_{\delta}$ and $I_{\rho_1,\rho_2}^{-L} = \bigr\{ u\in H^1(\Sigma) : I_{\rho_1,\rho_2}(u)<-L \bigr\}$. On the other hand, it is also possible to do the converse, namely to map (a retraction of) the set $X$ into appropriate subevels of $I_{\rho_1,\rho_2}$. In Section 4 we construct a family of new test functions parametrized on (a suitable subset of) $X$ on which $I_{\rho_1,\rho_2}$ attains arbitrarily low values, see Proposition \ref{low}. Letting
\[
X \quad \stackrel{\phi}{\longrightarrow} \quad I_{\rho_1,\rho_2}^{-L}
\]
the corresponding map, it turns out that the composition of these two maps is homotopic to the identity on $X$, see Proposition \ref{hom}.

Exploiting the fact that $X$ is non-contractible, we are able to introduce a min-max argument to find a critical point of $I_{\rho_1,\rho_2}$. In this framework, an essential point is to use the `monotonicity argument' introduced by Struwe in \cite{struwe} jointly with the compactness result of solutions proved in Section 2, since it is not known whether the Palais-Smale condition holds or not.

\newpage

\begin{center}
\bfseries Acknowledgements
\end{center}
The author would like to thank Prof. A. Malchiodi for support and fundamental discussions about the topics of this paper.

\section{Notations and preliminaries}

In this section we fix our notation and recall some useful preliminary facts. Throughout the paper, $\Sigma$ stands for a compact orientable surface without boundary with metric $g$. For simplicity, we normalize the volume $|\Sigma|$ of $\Sigma$ by $|\Sigma|=1$. We state in particular some variants and improvements of the Moser-Trudinger type inequality and some of their conseguences.

We write $d(x,y)$ to denote the distance between two points $x,y\in\Sigma$. In the same way, for any $p\in\Sigma$ and $\Omega,\Omega' \subseteq \Sigma$, we denote:
\[ d(p,\Omega)=\inf\bigr\{ d(p,x):x\in\Omega \bigr\}, \hspace{0.5cm} d(\Omega,\Omega')=\inf\bigr\{ d(x,y):x\in\Omega, y\in\Omega' \bigr\}.   \]
Moreover, the symbol $B_p(r)$ stands for the open metric ball of radius $r$ and center $p$, while $A_p(r,R)$ for the open annulus of radii $r$ and $R$, $r<R$. The complement of a set $\Omega$ in $\Sigma$ will be denoted by $\Omega^c$.

Recalling that we are assuming $|\Sigma|=1$, given a function $u\in L^1(\Sigma)$, we denote its average as
\[ \bar{u} = \int_\Sigma u\,dV_g. \]

Given $\delta>0$, we define the topological cone:
\begin{equation}  \overline{\Sigma}_{\delta} =  \bigr(\Sigma \times (0, +\infty)\bigr)\Bigr/ \bigr(\Sigma \times [\delta, + \infty)\bigr), \label{cone} \end{equation}
where the equivalence relation identifies $\Sigma \times [\delta, + \infty)$ to a single point.

Throughout the paper we will denote by $C$ large constants which
are allowed to vary among different formulas or even within lines.
When we want to stress the dependence of the constants on some
parameter (or parameters), we add subscripts to $C$, as $C_\delta$,
etc.. Also constants with subscripts are allowed to vary.
Moreover, sometimes we will write $o_{\alpha}(1)$ to denote
quantities that tend to $0$ as $\alpha \to 0$ or $\alpha \to
+\infty$, depending on the case. We will similarly use the symbol
$O_\alpha(1)$ for bounded quantities.

\bigskip

\noindent We begin with a compactness result which is deduced from the blow up theorem in \cite{ohtsuka-suzuki}.

\begin{thm}\label{thm-comp}
Suppose that $u_n$ satisfies
\[ -\Delta_g u_n = \rho_{1,n} \left(\frac{h_1(x)\,e^{u_n}}{\int_\Sigma h_1(x)\,e^{u_n} \,dV_g} - \frac{1}{|\Sigma|}\right) - \rho_{2,n} \left(\frac{h_2(x)\,e^{-u_n}}{\int_\Sigma h_2(x)\,e^{-u_n} \,dV_g} - \frac{1}{|\Sigma|} \right) \hspace{0.3cm} \mbox{on $\Sigma$}. \]
Assume that $\rho_{1,n},\rho_{2,n}\in(8\pi,16\pi)$ for any $n\in\mathbb{N}$ and that $\rho_{1,n}\rightarrow \rho_1\in(8\pi,16\pi)$ and $\rho_{2,n}\rightarrow \rho_2\in(8\pi,16\pi)$. Then the solution sequence $(u_n)_n$ (up to adding suitable constants) is uniformly bounded in $L^{\infty}(\Sigma)$ and there exist $u$ and a subsequence $(u_{n_k})_k$ such that
\[ u_{n_k} \rightarrow u, \]
where this $u$ is a solution to (\ref{eq}) for these $\rho_1$ and $\rho_2$.
\end{thm}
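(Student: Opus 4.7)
I would combine a Brezis-Merle type blow-up analysis with the local mass quantization from \cite{ohtsuka-suzuki} in order to deduce a global quantization statement; since the latter is incompatible with $\rho_i\in(8\pi,16\pi)$, blow-up must be ruled out and the rest is routine elliptic compactness.

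Since \eqref{eq} is invariant under the addition of a constant to $u$, I would first normalize $\bar{u}_n=0$. Arguing by contradiction, suppose $\|u_n\|_{L^\infty(\Sigma)}\to\infty$ along a subsequence. A Brezis-Merle type alternative applied to each density $\rho_{i,n}h_i e^{\pm u_n}/\int_\Sigma h_i e^{\pm u_n}\,dV_g$ should then produce finite singular sets $S_1,S_2\subset\Sigma$, local masses $m_i(p)\geq 0$ for $p\in S_i$, and a weak limit of the densities of the form $(\text{smooth part}) + \sum_{p\in S_i} m_i(p)\,\delta_p$, while $u_n$ converges outside $S_1\cup S_2$ to a function $u_*$ solving a suitable limit equation with Dirac sources.

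The next step is to invoke the local quantization result of \cite{ohtsuka-suzuki}, asserting that the admissible pairs $(m_1(p),m_2(p))$ at each blow-up point lie in a discrete set of multiples of $8\pi$; summing these contributions over $S_i$ and combining with an integration of the limit equation against $1$ on $\Sigma$, I expect to derive the global quantization: whenever blow-up occurs, $\rho_i$ must belong to $8\pi\mathbb{N}$. Since $\rho_i\in(8\pi,16\pi)$ is never a multiple of $8\pi$, the assumed blow-up would contradict this, forcing $\|u_n\|_{L^\infty}\leq C$ uniformly. The right-hand side of \eqref{eq} is then bounded in $L^\infty(\Sigma)$, and standard elliptic $L^p$ and Schauder estimates yield uniform $C^{2,\alpha}$ bounds; Arzelà-Ascoli produces a $C^2$-convergent subsequence whose limit $u$ solves \eqref{eq} with parameters $\rho_1,\rho_2$ by passage to the limit.

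The hard part will be precisely this passage from local to global quantization. The local statement controls each blow-up point in isolation, while globally one has to handle the possibility that $e^{u_n}$ and $e^{-u_n}$ concentrate at coinciding points (necessarily at different scales, as suggested by the improved Moser-Trudinger inequality of the paper), and one must rule out any fractional residual mass coming from the smooth limit $u_*$, so that each $\rho_i$ is forced to be an exact integer multiple of $8\pi$.
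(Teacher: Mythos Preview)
Your plan has a genuine gap at the step where you invoke the local result of \cite{ohtsuka-suzuki}. That result does \emph{not} say that the local masses $(m_1(p),m_2(p))$ at a common blow-up point are multiples of $8\pi$; what it gives is the Pohozaev-type constraint
\[
\bigl(m_1(p)-m_2(p)\bigr)^2 = 8\pi\bigl(m_1(p)+m_2(p)\bigr),
\]
together with the lower bounds $m_i(p)\ge 4\pi$. This relation cuts out a one-parameter curve, not a discrete set, so you cannot read off $m_i(p)\in 8\pi\mathbb{N}$ from it, and hence cannot jump directly to $\rho_i\in 8\pi\mathbb{N}$. This is precisely the point where the paper does real work: using the upper bound $m_i(p)<16\pi$ (which comes from $\rho_{i,n}<16\pi$), one checks algebraically that the system $4\pi\le m_i(p)<16\pi$ together with the quadratic relation has no solution, forcing $S_1\cap S_2=\emptyset$. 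Only after that does the blow-up theorem of \cite{ohtsuka-suzuki} conclude compactness when $\rho_i\notin 8\pi\mathbb{N}$.

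There is a second, related gap in your global step: even if local masses were multiples of $8\pi$, concluding $\rho_i\in 8\pi\mathbb{N}$ requires ruling out any residual smooth part of the limiting density, which you flag as ``the hard part'' but do not address. The paper avoids this entirely by reducing to the already-established dichotomy in \cite{ohtsuka-suzuki} once common blow-up points are excluded. Finally, the improved Moser--Trudinger inequality you allude to is a variational tool for the min-max construction and plays no role in the compactness proof; the ``different scales'' heuristic is not what drives the argument here.
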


\begin{proof}
Since $I_{\rho_1,\rho_2}$ is invariant under translation by constants in the argument, we can restrict ourselves to considering the subspace of $H^1(\Sigma)$ of functions with zero average.

Consider the blow up sets of the sequence $(u_n)_n$ given by
\[ S_1 = \Bigr\{ x\in\Sigma : \exists x_n\rightarrow x \mbox{\,\,such\,\,that\,\,} u_n(x_n)\rightarrow +\infty \Bigr\}, \]
\[ S_2 = \Bigr\{ x\in\Sigma : \exists x_n\rightarrow x \mbox{\,\,such\,\,that\,\,} u_n(x_n)\rightarrow -\infty \Bigr\}. \]
From the blow up theorem in \cite{ohtsuka-suzuki}, it is sufficient to show that $S_1 \cap S_2 = \emptyset$. We argue by contradiction. Assume that $x_0\in S_1 \cap S_2$. Define the blow up values at $x_0$ by
\[ m_1(x_0) = \lim_{r\rightarrow 0}\lim_{n\rightarrow +\infty} \int_{B_r(x_0)} \frac{\rho_{1,n}h_1(x)\,e^{u_n}}{\int_\Sigma h_1(x)\,e^{u_n}\,dV_g}\,dV_g, \]
\[ m_2(x_0) = \lim_{r\rightarrow 0}\lim_{n\rightarrow +\infty} \int_{B_r(x_0)} \frac{\rho_{2,n}h_2(x)\,e^{-u_n}}{\int_\Sigma h_2(x)\,e^{-u_n}\,dV_g}\,dV_g. \]
Since $\rho_{1,n},\rho_{2,n}\in(8\pi,16\pi)$, from the blow up theorem in \cite{ohtsuka-suzuki}, we have
\begin{equation}\label{bound}
4\pi\leq m_1(x_0)<16\pi, \hspace{0.5cm} 4\pi\leq m_2(x_0)<16\pi, 
\end{equation}
and
\begin{equation}\label{propr} 
\bigr(m_1(x_0)-m_2(x_0)\bigr)^2 = 8\pi\bigr(m_1(x_0)+m_2(x_0)\bigr). 
\end{equation}
By the last equality we derive
\[ 
m_1(x_0) = m_2(x_0)+4\pi \pm 4\sqrt{\pi m_2(x_0)+\pi^2}. 
\]
First, let us consider the case $m_1(x_0) = m_2(x_0)+4\pi + 4\sqrt{\pi m_2(x_0)+\pi^2}.$ Using the fact that $4\pi\leq m_2(x_0)$, we derive that $m_1(x_0)\geq 16\pi$, which is a contradiction to the first estimate in \eqref{bound}. 

If instead we consider the case $m_1(x_0) = m_2(x_0)+4\pi - 4\sqrt{\pi m_2(x_0)+\pi^2}$, the estimate $4\pi\leq m_2(x_0)<16\pi$ implies that $m_1(x_0)<12\pi$. By interchanging the roles of $m_1(x_0)$ and $m_2(x_0)$, we obtain the same inequality for $m_2(x_0)$. Therefore we have
\begin{equation}\label{bound2}
4\pi\leq m_1(x_0)<12\pi, \hspace{0.5cm} 4\pi\leq m_2(x_0)<12\pi. 
\end{equation} 
On the other hand, using \eqref{propr} jointly with the fact that $m_i(x_0)\geq 4\pi$, $i=1,2$, we deduce that
\[
|m_1(x_0)-m_2(x_0)|\geq 8\pi,
\]
which is a contradiction to \eqref{bound2}.
\end{proof}

Next, we recall some Moser-Trudinger type inequalities by starting with the standard one, i.e. for $u\in H^1(\Sigma)$ it holds
\begin{equation} \log \int_\Sigma e^{u-\bar{u}}\,dV_g \leq \frac{1}{16\pi}\int_\Sigma |\nabla_g u|^2\,dV_g  + C_\Sigma. \label{mt}\end{equation}
As observed in the introduction, problem (\ref{eq}) is the Euler-Lagrange equation of the functional $I_{\rho_1,\rho_2}$ given in (\ref{func}). If we consider the space 
\[ \spazio(\Sigma) = \left\{ u\in H^1(\Sigma): \int_\Sigma u \,dV_g=0 \right\}, \]
the following result has been proved by Ohtsuka and Suzuki in \cite{ohtsuka-suzuki}.

\begin{thm}
The functional $I_{\rho_1,\rho_2}$ is bounded from below on $\spazio(\Sigma)$ if and only if $\rho_i\leq8\pi$, $i=1,2$.
\end{thm}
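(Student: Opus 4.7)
The plan is to handle the two implications separately, working on $\spazio(\Sigma)$ so that $\bar u = 0$ and the last two terms of \eqref{func} drop out, leaving
\[
I_{\rho_1,\rho_2}(u) = \tfrac{1}{2}\int_\Sigma |\nabla_g u|^2 \,dV_g - \rho_1 \log\int_\Sigma h_1 e^u \,dV_g - \rho_2 \log\int_\Sigma h_2 e^{-u} \,dV_g.
\]

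For the \emph{sufficiency}, assume $\rho_1,\rho_2 \leq 8\pi$. Since $|\Sigma|=1$, Jensen's inequality gives $\log\int_\Sigma e^{\pm u}\,dV_g \geq 0$ on $\spazio(\Sigma)$, and since $h_1,h_2$ are bounded and bounded away from $0$ on the compact surface $\Sigma$ we have $\log\int_\Sigma h_i e^{\pm u}\,dV_g = \log\int_\Sigma e^{\pm u}\,dV_g + O(1)$. Combining this with $\rho_i \leq 8\pi$ and the non-negativity of the two logs yields
\[
\rho_1\log\int_\Sigma h_1 e^u\,dV_g + \rho_2 \log\int_\Sigma h_2 e^{-u}\,dV_g \leq 8\pi\Bigl(\log\int_\Sigma e^u\,dV_g + \log\int_\Sigma e^{-u}\,dV_g\Bigr) + C,
\]
and the symmetric Moser-Trudinger inequality recalled just before the statement bounds the right-hand side by $\tfrac{1}{2}\|\nabla_g u\|_{L^2}^2 + C$, whence $I_{\rho_1,\rho_2}(u) \geq -C$.

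For the \emph{necessity}, by the symmetry $u \leftrightarrow -u$ (which swaps the roles of $(\rho_1,h_1)$ and $(\rho_2,h_2)$) it suffices to show that $\rho_1 > 8\pi$ already forces $\inf I_{\rho_1,\rho_2} = -\infty$. I would use the standard Liouville bubble at any $p\in\Sigma$: in an isothermal chart centred at $p$ set
\[
\varphi_\lambda(x) = -2\log\bigl(1+\lambda^2 d(x,p)^2\bigr),
\]
cut off smoothly outside a small geodesic ball around $p$, and put $u_\lambda := \varphi_\lambda - \bar\varphi_\lambda \in \spazio(\Sigma)$. The classical asymptotics, as $\lambda \to +\infty$, read
\[
\tfrac{1}{2}\|\nabla_g u_\lambda\|_{L^2}^2 = 16\pi\log\lambda + O(1), \quad \log\int_\Sigma e^{u_\lambda}\,dV_g = 2\log\lambda + O(1), \quad \log\int_\Sigma e^{-u_\lambda}\,dV_g = O(1),
\]
so that $I_{\rho_1,\rho_2}(u_\lambda) = (16\pi - 2\rho_1)\log\lambda + O(1)\to -\infty$ precisely when $\rho_1 > 8\pi$.

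The sufficiency half is essentially a one-line consequence of the Moser-Trudinger inequality. The main labour, as I see it, lies in the necessity direction: the bubble asymptotics are standard in the Liouville literature, but some care is needed in verifying the cancellation $\log\int e^{-u_\lambda}=O(1)$, which relies on the precise balance between the polynomial blow-up of $e^{-\varphi_\lambda}$ and the subtractive term $e^{\bar\varphi_\lambda}\asymp\lambda^{-4}$, and on controlling the cutoff region using that $h_1,h_2$ are smooth and positive.
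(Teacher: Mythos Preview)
The paper does not supply its own proof of this statement: it is quoted as a result of Ohtsuka--Suzuki \cite{ohtsuka-suzuki}, and the symmetric Moser--Trudinger inequality \eqref{mt2} is then recorded as an equivalent reformulation. So there is nothing in the paper to compare your argument against beyond the observation that both directions are attributed to that reference.

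Your proposal is correct in both directions. For sufficiency, note a small logical point: in the paper's Section~2 the inequality \eqref{mt2} is presented \emph{after} the theorem as a consequence of it, while in the introduction the same inequality is stated as what Ohtsuka--Suzuki actually prove. Your use of it is therefore not circular, but you are essentially invoking the nontrivial half of the cited result to deduce the other, which is indeed a one-line deduction once \eqref{mt2} is granted. For necessity, your bubble computation is the standard one and the asymptotics you list are right; in particular $\bar\varphi_\lambda = -4\log\lambda + O(1)$ and $\int_\Sigma e^{-\varphi_\lambda}\,dV_g \asymp \lambda^4$ combine to give $\log\int_\Sigma e^{-u_\lambda}\,dV_g = O(1)$, so the $\rho_2$-term is harmless and only $\rho_1>8\pi$ drives $I_{\rho_1,\rho_2}(u_\lambda)\to-\infty$. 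The cutoff causes no trouble since it lives in a fixed annulus where all quantities are uniformly controlled.
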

 
In view of this result, similarly to inequality (\ref{mt}), we can also obtain a Moser-Trudinger inequality with $e^u$ and $e^{-u}$ simultaneously. Namely for $u\in H^1(\Sigma)$ it holds
\begin{equation} \log\int_\Sigma e^{u-\bar{u}} \,dV_g  + \log\int_\Sigma e^{-u+\bar{u}} \,dV_g \leq \frac{1}{16\pi}\int_\Sigma |\nabla_g u|^2 \,dV_g + C_\Sigma. \label{mt2}\end{equation}
It is well known that an improved inequality will hold if $e^u$ has integral bounded from below on different regions of $\Sigma$ of positive mutual distance.

\begin{prop}\emph{(\cite{zhou})}
For a fixed integer $l$, let $\Omega_1,\dots,\Omega_l$ be subsets of $\Sigma$ satisfying $d(\Omega_i,\Omega_j)\geq\delta_0$ for $i\neq j$, where $\delta_0$ is a positive real number, and let $\gamma_0\in\bigr(0,\frac{1}{l}\bigr)$. Then, for any $\varepsilon>0$ there exists a constant $C=C(\Sigma,l,\varepsilon,\delta_0,\gamma_0)$ such that
\[ l\log\int_\Sigma e^{u-\bar{u}}\,dV_g + \log\int_\Sigma e^{-u+\bar{u}}\,dV_g \leq \frac{1}{16\pi-\varepsilon}\int_\Sigma |\nabla_g u|^2\,dV_g  + C \]
for all the functions $u\in H^1(\Sigma)$ satisfying
\[ \frac{\int_{\Omega_i}e^u\,dV_g}{\int_\Sigma e^u\,dV_g} \geq \gamma_0, \hspace{0.5cm} \forall\,i\in \{1,\dots,l\}. \]
\end{prop}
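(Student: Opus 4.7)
The strategy is the Chen--Li-type improvement: since the mass of $e^u$ is spread over $l$ pairwise separated regions, one can gain a factor of $1/l$ on the Moser-Trudinger coefficient for $\log\int_\Sigma e^{u-\bar u}$, and then handle $\log\int_\Sigma e^{-u+\bar u}$ using the unused portion of the Dirichlet energy. Normalize $\bar u=0$. Construct smooth cutoffs $\eta_1,\ldots,\eta_l$ with $0\leq\eta_i\leq 1$, $\eta_i\equiv 1$ on $\Omega_i$, $\mathrm{supp}(\eta_i)\subset A_i:=\{x\in\Sigma:d(x,\Omega_i)<\delta_0/4\}$, the $A_i$ being pairwise disjoint, and $|\nabla_g\eta_i|\leq C(\delta_0)$. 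By pigeonhole, there exists an index $i^*$ with $\int_{A_{i^*}}|\nabla_g u|^2\,dV_g\leq\frac{1}{l}\int_\Sigma|\nabla_g u|^2\,dV_g$.

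Next, I would apply the standard Moser-Trudinger inequality \eqref{mt} to the compactly supported function $\eta_{i^*}(u-\bar u_{A_{i^*}})$ (viewed, in a local chart, as an element of $H^1_0$ of a Euclidean ball containing $A_{i^*}$). The cross-terms from $u\,\nabla_g\eta_{i^*}$ and the Poincar\'e correction needed to replace $\bar u_{A_{i^*}}$ by the global average are absorbed via Young's inequality into the gradient term at the cost of a small loss $\varepsilon/2$ in the constant. This yields
\[
\log\int_{A_{i^*}} e^{u-\bar u_{A_{i^*}}}\,dV_g \leq \frac{1}{16\pi-\varepsilon/2}\int_{A_{i^*}}|\nabla_g u|^2\,dV_g + C.
\]
The hypothesis $\int_{\Omega_{i^*}}e^u\,dV_g/\int_\Sigma e^u\,dV_g\geq\gamma_0$ then gives $\log\int_\Sigma e^{u-\bar u} \leq \log\int_{A_{i^*}}e^{u-\bar u_{A_{i^*}}} + \bar u_{A_{i^*}} + \log(1/\gamma_0)$; combining with the pigeonhole bound, controlling $\bar u_{A_{i^*}}$ by Poincar\'e and Young, and multiplying by $l$,
\[
l\log\int_\Sigma e^{u-\bar u}\,dV_g \leq \frac{1}{16\pi-\varepsilon/2}\int_\Sigma|\nabla_g u|^2\,dV_g + C.
\]

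To add $\log\int_\Sigma e^{-u+\bar u}$ without doubling the coefficient on the right-hand side, I would exploit that a fraction at least $1-1/l$ of the Dirichlet energy, supported on $\Sigma\setminus A_{i^*}$, has not been used in the previous step. The idea is to split $u=v+w$ with $v$ essentially captured by the cutoff near $A_{i^*}$ and $w$ on its complement, and to apply a local variant of the joint inequality \eqref{mt2} to the $-u$ contribution outside $A_{i^*}$, so that its estimate only costs a fraction of $\int_\Sigma|\nabla_g u|^2\,dV_g$ disjoint from the one already spent. Summing the two bounds and choosing $\varepsilon$ small enough to absorb all the error terms produced by cutoffs, Young and Poincar\'e yields the claim.

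The main obstacle is precisely this last balancing step: since no concentration hypothesis is available for $e^{-u}$, its mass may lie anywhere on $\Sigma$, so $\log\int_\Sigma e^{-u+\bar u}$ cannot be freely localized on $\Sigma\setminus A_{i^*}$, and the naive bound via \eqref{mt} applied to $-u$ would cost a full $\frac{1}{16\pi}\int_\Sigma|\nabla_g u|^2\,dV_g$ and hence push the final constant above $\frac{1}{16\pi-\varepsilon}$. Handling this requires a careful decomposition of both $u$ and $|\nabla_g u|^2$ into pieces associated to $A_{i^*}$ and to its complement, together with a precise bookkeeping of the cross-contributions, so that the sum of the two contributions can be kept exactly at $\frac{1}{16\pi-\varepsilon}\int_\Sigma|\nabla_g u|^2\,dV_g + C$.
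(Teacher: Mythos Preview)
The paper does not prove this proposition at all; it is stated as a citation from \cite{zhou} and used as a known ingredient. So there is no ``paper's own proof'' to compare your attempt against.

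On its own merits: your treatment of the $e^{u}$ part is the standard Chen--Li argument and is correct. The genuine gap is exactly where you say it is, and your proposed fix (``careful decomposition of both $u$ and $|\nabla_g u|^2$ \dots\ precise bookkeeping'') is not yet a proof. The difficulty is real: applying the ordinary Moser--Trudinger inequality to $-u$ separately costs a full $\frac{1}{16\pi}\int_\Sigma|\nabla_g u|^2$, and no amount of bookkeeping on disjoint supports will make that disappear, because the $e^{-u}$ mass is not localized.

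The missing idea is that one should never treat $e^{u}$ and $e^{-u}$ by two separate Moser--Trudinger inequalities. Instead one uses, from the very first step, the \emph{joint} local inequality (stated in this paper as Proposition~\ref{mt-local}):
\[
\log\int_{\Omega_i} e^{u}\,dV_g + \log\int_{\Omega_i} e^{-u}\,dV_g \le \frac{1}{16\pi-\varepsilon}\int_{A_i}|\nabla_g u|^2\,dV_g + C,
\]
for each of the $l$ disjoint enlarged regions $A_i\supset\Omega_i$. Summing over $i$ and using disjointness of the $A_i$ gives the full Dirichlet integral on the right with the correct constant $\frac{1}{16\pi-\varepsilon}$; the spreading hypothesis converts $\sum_i\log\int_{\Omega_i}e^{u}$ into $l\log\int_\Sigma e^{u}-C$. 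One then keeps a \emph{single} term $\log\int_{\Omega_j}e^{-u}$ out of the sum $\sum_i\log\int_{\Omega_i}e^{-u}$ and lower-bounds the remaining $l-1$ terms by Jensen and Poincar\'e--Young, absorbing the error into the $\varepsilon$. To guarantee that the retained $j$-th term controls $\log\int_\Sigma e^{-u}$, one covers $\Sigma$ by finitely many fixed sets (the $\Omega_i$ together with pieces of the complement) so that $e^{-u}$ must carry a fixed fraction of its mass on at least one of them; this is the step your sketch does not supply. In short, the route you describe via a splitting $u=v+w$ and ``unused energy on $\Sigma\setminus A_{i^*}$'' cannot close, whereas the joint local inequality handles both exponentials in one stroke and is the device actually used in \cite{zhou}.
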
 

We next state a result which is a local version of the inequality (\ref{mt2}), that will be of use later on.

\begin{prop}\label{mt-local}
Fix $\delta>0$, and let $\Omega_1\subset\Omega_2\subset\Sigma$ be such that
$d(\Omega_1, \partial \Omega_2) \geq \delta$. Then, for any $\varepsilon > 0$
there exists a constant $C = C(\varepsilon, \delta)$ such that for all $u
\in H^1(\Sigma)$
\[ \log \int_{\Omega_1}  e^{u}\, dV_g + \log
\int_{\Omega_1} e^{-u}\, dV_g \leq \frac{1}{16\pi-\varepsilon} \int_{\Omega_2} |\nabla_g u|^2 \,dV_g + C. \]
\end{prop}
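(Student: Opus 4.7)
The plan is to reduce the local estimate to the global Moser--Trudinger inequality \eqref{mt2} by means of a cutoff argument, exploiting the shift-invariance of the left-hand side: replacing $u$ by $u+c$ for any constant $c\in\mathbb{R}$ affects neither $\log\int_{\Omega_1}e^u\,dV_g+\log\int_{\Omega_1}e^{-u}\,dV_g$ nor $\int_{\Omega_2}|\nabla_g u|^2\,dV_g$, so I may first normalize $u$ to have zero average on the annular region $\Omega_2\setminus\Omega_1$, in order to invoke the Poincar\'e--Wirtinger inequality there.

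Next, I pick a smooth cutoff $\eta\colon\Sigma\to[0,1]$ with $\eta\equiv 1$ on $\Omega_1$, $\operatorname{supp}\eta\subset\Omega_2$ and $|\nabla_g\eta|\leq C/\delta$, and set $v=\eta u\in H^1(\Sigma)$. Since $v=u$ on $\Omega_1$,
\[ \log\int_{\Omega_1}e^u\,dV_g+\log\int_{\Omega_1}e^{-u}\,dV_g \;\leq\; \log\int_\Sigma e^v\,dV_g+\log\int_\Sigma e^{-v}\,dV_g, \]
and \eqref{mt2} applied to $v$ (with $\pm\bar v$ cancelling in the sum) yields
\[ \log\int_\Sigma e^v\,dV_g+\log\int_\Sigma e^{-v}\,dV_g \;\leq\; \frac{1}{16\pi}\int_\Sigma|\nabla_g v|^2\,dV_g+C_\Sigma. \]

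It remains to compare $\int_\Sigma|\nabla_g v|^2$ with $\int_{\Omega_2}|\nabla_g u|^2$. Expanding $|\nabla_g v|^2=\eta^2|\nabla_g u|^2+2\eta u\,\nabla_g u\cdot\nabla_g\eta+u^2|\nabla_g\eta|^2$ and applying Peter--Paul's inequality with a parameter $\alpha>0$,
\[ \int_\Sigma|\nabla_g v|^2\,dV_g \;\leq\; (1+\alpha)\int_{\Omega_2}|\nabla_g u|^2\,dV_g \;+\; (1+\alpha^{-1})\int_{\Omega_2\setminus\Omega_1}u^2|\nabla_g\eta|^2\,dV_g, \]
and the Poincar\'e--Wirtinger inequality on $\Omega_2\setminus\Omega_1$ (valid thanks to the normalization) together with $|\nabla_g\eta|\leq C/\delta$ controls the last integral by $C_\delta\int_{\Omega_2\setminus\Omega_1}|\nabla_g u|^2\,dV_g$.

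The hardest step will be the final matching of the sharp coefficient $\frac{1}{16\pi-\varepsilon}$: one has to choose $\alpha$ small in terms of $\varepsilon$ and absorb the Poincar\'e-type error, whose size depends on $\delta$ rather than on $\varepsilon$, into the slack afforded by $\varepsilon$; the global constant $C_\Sigma$ together with the lower-order contributions is then subsumed into $C=C(\varepsilon,\delta)$.
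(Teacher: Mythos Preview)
Your cutoff argument is the right starting point, and the global Moser--Trudinger inequality \eqref{mt2} applied to $v=\eta u$ is exactly what one wants. The gap is in the last paragraph, precisely at the step you flag as ``hardest''. After Peter--Paul you have
\[
\int_\Sigma|\nabla_g v|^2 \le (1+\alpha)\int_{\Omega_2}|\nabla_g u|^2 + (1+\alpha^{-1})\,\frac{C}{\delta^2}\int_{\Omega_2\setminus\Omega_1} u^2\,dV_g,
\]
and Poincar\'e--Wirtinger on $\Omega_2\setminus\Omega_1$ gives at best $\int u^2\le C_P\int|\nabla_g u|^2$ with a \emph{fixed} constant $C_P>0$. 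Thus the coefficient in front of $\int_{\Omega_2}|\nabla_g u|^2$ is $(1+\alpha)+(1+\alpha^{-1})\,C_P\,C/\delta^2$, and this cannot be made close to~$1$: making $\alpha$ small blows up the second term, and making $\alpha$ large spoils the first. So you obtain $\frac{1}{16\pi}(1+c_\delta)$ in place of $\frac{1}{16\pi-\varepsilon}$, which is not enough. (A secondary issue is that $C_P$ depends on the geometry of $\Omega_2\setminus\Omega_1$, not just on $\delta$; this set need not even be connected.)

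The paper's remedy is to insert a spectral decomposition before cutting off: write $u=v+w$ on $\Omega_2$ using the Neumann eigenfunctions, with $v$ a finite sum of low modes (hence $v\in L^\infty(\Omega_2)$) and $w$ the high-frequency remainder. One then applies your cutoff to $w$ rather than to $u$. The point is that for $w$ one has $\|w\|_{L^2(\Omega_2)}^2\le \Lambda^{-1}\|\nabla_g w\|_{L^2(\Omega_2)}^2$, where $\Lambda$ is the spectral threshold and can be taken as large as one likes; this makes the Poincar\'e-type error $(1+\alpha^{-1})\,C\Lambda^{-1}/\delta^2$ arbitrarily small for suitable $\Lambda=\Lambda(\varepsilon,\delta,\alpha)$. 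The low-frequency part $v$ only contributes $e^{\pm v}\le e^{\|v\|_\infty}$ to the integrals, i.e.\ a term linear in $\|\nabla_g u\|_{L^2(\Omega_2)}$, which is sub-quadratic and can be absorbed into $\varepsilon\int_{\Omega_2}|\nabla_g u|^2+C(\varepsilon,\delta)$ by Young's inequality. This decoupling of the bounded part (additive cost) from the oscillatory part (multiplicative cost that can be made close to~$1$) is the missing ingredient in your argument.
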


\begin{proof}
The proof is developed exactly as in Proposition 2.3 of \cite{mal-ruiz}, with obvious modifications. Here we just sketch the proof for the reader's convenience. First, we consider a spectral decomposition of the Laplacian on $\Omega_2$ (with Neumann boundary conditions), in order to write $u$ as $u=v+w$ with $v\in L^\infty(\Omega_2)$ and $w\in H^1(\Omega_2)$. We next consider a smooth cutoff function $\chi$ with values into $[0,1]$ satisfying
$$
  \left\{
    \begin{array}{ll}
      \chi(x) = 1 & \hbox{ for } x \in \Omega_1,\\
      \chi(x) = 0 & \hbox{ if } d(x, \Omega) > \delta/2,
    \end{array}
  \right.
$$
and then define $\tilde{w}(x) = \chi(x) w(x)$. We now apply the Moser-Trudinger inequality (\ref{mt2}) to $\tilde{w}$ to deduce the desired inequality.
\end{proof}

We give now a criterion which is a first step in studying the properties of the low sublevels of $I_{\rho_1,\rho_2}$. We first state a lemma concerning a covering argument, which is a particular case of a more general setting in \cite{mal-ruiz}, Lemma 2.5.

\begin{lem} \label{cover}
Let $\delta_0 > 0$, $\gamma_0 > 0$ be fixed, and let $\Omega_{i,j} \subseteq \Sigma$,
$i,j = 1, 2$, satisfy $d(\Omega_{i,j},\Omega_{i,k}) \geq \delta_0$
for $j \neq k$. Suppose that $u \in H^1(\Sigma)$ is a
function verifying
\[
    \frac{\int_{\Omega_{1,j}} e^{u} \,dV_g}{\int_\Sigma e^{u} \,dV_g}
    \geq \gamma_0, \hspace{0.3cm} \frac{\int_{\Omega_{2,j}} e^{-u} \,dV_g}{\int_\Sigma e^{-u} \,dV_g}
    \geq \gamma_0, \hspace{0.3cm} j=1,2.  
\]
Then there exist positive constants $\tilde{\gamma}_0$, $\tilde{\delta}_0$,
depending only on $\gamma_0$, $\delta_0$, and two sets $\tilde{\Omega}_1, \tilde{\Omega}_2\subseteq \Sigma$, depending also on $u$ such that
\[
    d(\tilde{\Omega}_1, \tilde{\Omega}_2) \geq \tilde{\delta}_0; \qquad
    \quad \frac{\int_{\tilde{\Omega}_{i}} e^{u} \,dV_g}{\int_\Sigma e^{u} \,dV_g}
    \geq \tilde{\gamma}_0, \quad \frac{\int_{\tilde{\Omega}_{i}} e^{-u} \,dV_g}{\int_\Sigma
    e^{-u} \,dV_g} \geq \tilde{\gamma}_0; \quad i = 1, 2.
\]
\end{lem}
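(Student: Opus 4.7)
The argument has two stages: localize each of the four hypothesis sets to a single small ball carrying a definite fraction of the relevant mass, and then pair up the resulting positive and negative centers so that the combined pairs remain well separated.

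For the localization, fix $r = \delta_0/10$ and cover $\Sigma$ by $N = N(r)$ balls of radius $r$. Applying the pigeonhole principle to $\int_{\Omega_{1,j}} e^u \,dV_g$ yields, for each $j\in\{1,2\}$, a center $x_j^+$ with $\mathrm{dist}(x_j^+, \Omega_{1,j}) \leq r$ and $\int_{B(x_j^+, r)} e^u \,dV_g \geq \frac{\gamma_0}{N} \int_\Sigma e^u \,dV_g$. Analogously I obtain centers $x_j^-$ with the corresponding concentration property for $e^{-u}$ and $\mathrm{dist}(x_j^-, \Omega_{2,j}) \leq r$. The separation hypothesis on the $\Omega_{i,j}$'s then propagates to $d(x_1^+, x_2^+) \geq \delta_0 - 2r$ and $d(x_1^-, x_2^-) \geq \delta_0 - 2r$.

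The matching step is the core of the proof. I claim that at least one of the two bijections $\pi : \{1,2\} \to \{1,2\}$ satisfies both $d(x_1^+, x_{\pi(2)}^-) \geq \delta_0/3$ and $d(x_2^+, x_{\pi(1)}^-) \geq \delta_0/3$. Granting the claim, set $\tilde{\Omega}_i := B(x_i^+, r) \cup B(x_{\pi(i)}^-, r)$ for $i=1,2$. Each $\tilde{\Omega}_i$ inherits a $\gamma_0/N$-fraction of both $\int_\Sigma e^u \,dV_g$ and $\int_\Sigma e^{-u} \,dV_g$ from the localization, while $d(\tilde{\Omega}_1, \tilde{\Omega}_2)$ is bounded below by the minimum of four ball-to-ball distances: the two same-sign ones $d(B(x_1^+, r), B(x_2^+, r))$ and $d(B(x_1^-, r), B(x_2^-, r))$ are $\geq \delta_0 - 2r$ by Stage 1, while the two crossed ones are $\geq \delta_0/3 - 2r$ by the claim. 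Hence $\tilde{\delta}_0 := \delta_0/3 - 2r$ and $\tilde{\gamma}_0 := \gamma_0/N$, both depending only on $\delta_0$ and $\gamma_0$, do the job.

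The matching claim itself is proved by contradiction. If neither bijection works, then in each of the two bijections at least one of the two required cross-distances is strictly less than $\delta_0/3$; enumerating the four resulting possibilities and chaining two triangle inequalities in each case yields either $d(x_1^+, x_2^+) < 2\delta_0/3$ or $d(x_1^-, x_2^-) < 2\delta_0/3$, contradicting Stage 1 since $r < \delta_0/6$. The main obstacle is precisely this finite combinatorial case check; the localization and the distance bookkeeping are routine.
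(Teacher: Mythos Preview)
Your argument is correct. The paper does not give its own proof but merely cites Lemma~2.5 in \cite{mal-ruiz}; your two-stage strategy (localization to small balls via a finite covering and pigeonhole, followed by a combinatorial matching of positive and negative centers checked via triangle-inequality case analysis) is exactly the approach used there, specialized to the $2\times 2$ situation.
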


Using this result it is indeed possible to obtain an improvement of the constant in the Moser-Trudinger inequality (\ref{mt2}). 

\begin{prop}\label{mt-imp}
Let $u\in H^1(\Sigma)$ be a function satisfying
the assumptions of Lemma \ref{cover} for some positive constants
$\delta_0, \gamma_0$. Then for any $\varepsilon > 0$ there exists $C=C(\varepsilon) > 0$,
depending on $\varepsilon, \delta_0$, and $\gamma_0$ such that
\[
  \log \int_{\Sigma} e^{u-\bar{u}} \,dV_g + \log
  \int_{\Sigma} e^{-u+\bar{u}} \,dV_g \leq \frac{1}{32\pi-\varepsilon} \int_\Sigma       |\nabla_g u|^2 \,dV_g + C.
\]
\end{prop}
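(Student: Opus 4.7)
The strategy is to combine Lemma \ref{cover} with the local Moser--Trudinger inequality of Proposition \ref{mt-local}, exploiting the disjointness of the two regions produced by the lemma to gain a factor of two in the constant. Since both sides of the claimed inequality are invariant under adding a constant to $u$, I would assume without loss of generality that $\bar u = 0$ from the outset.

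First, I would apply Lemma \ref{cover} to produce two sets $\tilde\Omega_1, \tilde\Omega_2 \subseteq \Sigma$ with $d(\tilde\Omega_1, \tilde\Omega_2) \geq \tilde\delta_0$ such that both $e^u$ and $e^{-u}$ carry at least a fixed fraction $\tilde\gamma_0$ of their total mass on each $\tilde\Omega_i$. Next, I would thicken each $\tilde\Omega_i$ by setting
\[
\Omega_i^* = \bigl\{ x \in \Sigma : d(x, \tilde\Omega_i) < \tilde\delta_0/3 \bigr\}, \qquad i = 1, 2,
\]
so that $d(\tilde\Omega_i, \partial \Omega_i^*) \geq \tilde\delta_0/3$ and, crucially, $\Omega_1^* \cap \Omega_2^* = \emptyset$. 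This disjointness is the geometric source of the improvement.

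Then I would apply Proposition \ref{mt-local} on the pair $(\tilde\Omega_i, \Omega_i^*)$ for each $i$, with parameter $\varepsilon'>0$ to be fixed later, obtaining
\[
\log \int_{\tilde\Omega_i} e^u \, dV_g + \log \int_{\tilde\Omega_i} e^{-u} \, dV_g \leq \frac{1}{16\pi - \varepsilon'} \int_{\Omega_i^*} |\nabla_g u|^2 \, dV_g + C, \qquad i = 1,2.
\]
Summing the two inequalities and using that $\Omega_1^*$ and $\Omega_2^*$ are disjoint subsets of $\Sigma$ gives
\[
\sum_{i=1}^{2} \Bigl( \log \int_{\tilde\Omega_i} e^u\,dV_g + \log \int_{\tilde\Omega_i} e^{-u}\,dV_g \Bigr) \leq \frac{1}{16\pi - \varepsilon'} \int_\Sigma |\nabla_g u|^2\, dV_g + 2C.
\]

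To finish, I would bound the left-hand side from below using the mass estimates from Lemma \ref{cover}: since $\int_{\tilde\Omega_i} e^{\pm u}\,dV_g \geq \tilde\gamma_0 \int_\Sigma e^{\pm u}\,dV_g$, the left-hand side is at least
\[
2 \log \int_\Sigma e^u\,dV_g + 2 \log \int_\Sigma e^{-u}\,dV_g + 4 \log \tilde\gamma_0.
\]
Dividing through by $2$ and choosing $\varepsilon'$ small enough so that $\frac{1}{2(16\pi - \varepsilon')} \leq \frac{1}{32\pi - \varepsilon}$ yields the desired inequality (after absorbing $\bar u$-independent constants into $C$). I do not anticipate any real obstacle: the whole point is that Lemma \ref{cover} already extracts the concentration structure, and the factor $2$ in front of $16\pi$ comes for free from summing two local inequalities over disjoint regions.
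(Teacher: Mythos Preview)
Your proposal is correct and follows essentially the same approach as the paper: apply Lemma \ref{cover} to obtain $\tilde\Omega_1,\tilde\Omega_2$, thicken them to disjoint neighborhoods (the paper uses radius $\tilde\delta_0/2$ instead of your $\tilde\delta_0/3$), apply Proposition \ref{mt-local} on each pair, sum, and use the mass lower bounds $\int_{\tilde\Omega_i} e^{\pm u}\,dV_g \geq \tilde\gamma_0 \int_\Sigma e^{\pm u}\,dV_g$ together with the disjointness to halve the constant.
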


\begin{proof}
To obtain the thesis we can argue exactly as in Proposition 2.6 of \cite{mal-ruiz}. First we set $\tilde{\delta}_0, \tilde{\gamma}_0$ and $\tilde{\Omega}_1, \tilde{\Omega}_2$ as
in Lemma \ref{cover}. Then we apply Proposition \ref{mt-local} with $\tilde{\Omega}_i$ and $U_i=\bigr\{x \in \Omega:\
d(x, \tilde{\Omega}_i) < \tilde{\delta}_0/2 \bigr\}$ for $i=1,2$. Observing that
\[ \log \int_{\tilde{\Omega}_i}  e^{u} dV_g\geq \log \left ( \int_{\Sigma}  e^{u}
dV_g\right ) + \log \tilde{\gamma}_0,
\]
\[ \log \int_{\tilde{\Omega}_i}  e^{-u} dV_g\geq \log \left ( \int_{\Sigma}  e^{-u}
dV_g\right ) + \log \tilde{\gamma}_0 \]
for $i=1,2,$ and that $U_1\cap U_2=\emptyset$, we deduce the thesis.
\end{proof}

Proposition \ref{mt-imp} implies that on low sublevels of the functional $I_{\rho_1,\rho_2}$, at least one of the components of the couple $(e^u,e^{-u})$ must be very concentrated around a certain point. We will present in the sequel a more detailed description of the topology of low sublevels.

\section{Improved inequality}

Following the ideas presented by Malchiodi and Ruiz in \cite{mal-ruiz}, in this section we exhibit an improved Moser-Trudinger inequality under suitable conditions of concentration of the involved function.

\smallskip

First, we give continuous definitions of \emph{center of mass} and \emph{scale
of concentration} of positive functions normalized in $L^1$. Let us consider the set
\[
  A = \left\{ f \in L^1(\Sigma) \; : \; f >
  0 \ \hbox{ a. e. and } \int_\Sigma f dV_g = 1 \right\},
\]
endowed with the topology inherited from $L^1(\Sigma)$. Then we have the following result.

\begin{prop}\emph{(\cite{mal-ruiz})}\label{conc}
Let us fix a constant $R>1$. Then there exist \,\mbox{$\delta=\delta(R)\!>\!0$} and a continuous map:
\[ \psi : A \rightarrow \overline{\Sigma}_{\delta}, \qquad  \psi(f)= (\beta, \sigma), \]
satisfying the following property: for any $f \in A$ there exists
$p \in \Sigma$ such that

\begin{enumerate}

\item[{\emph a)}] $ d(p, \beta) \leq  C' \sigma$ for $C' = \max\bigr\{3
R+1, \delta^{-1}diam(\Sigma) \bigr\}.$

\item[{\emph b)}] There holds: 
\[ \int_{B_p(\sigma)} f \, dV_g >
\tau, \qquad  \int_{B_p(R \sigma)^c} f \, dV_g > \tau, \]
where $\tau>0$ depends only on $R$ and $\Sigma$.

\end{enumerate}
\end{prop}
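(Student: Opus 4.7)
The plan is to construct $\sigma$ and $\beta$ explicitly via continuous functionals built from the mass distribution of $f$. For $p \in \Sigma$ and $r > 0$, set $T(p, r; f) = \int_{B_p(r)} f\, dV_g$; this is jointly continuous in $(p, r, f)$ when $A$ carries the $L^1$ topology, non-decreasing in $r$, and tends to $0$ as $r \to 0^+$. Fix a threshold $\tau \in (0, 1/2)$ of order $1/R^2$, to be specified at the end.

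First I would define the scale by
\[
\tilde\sigma(f) = \inf\Bigl\{r > 0 \, : \, \max_{p \in \Sigma} T(p, r; f) \geq \tau\Bigr\},
\]
which is non-empty since $T(p, r; f) \to 1$ as $r \to \operatorname{diam}(\Sigma)$. The raw function $\tilde\sigma$ is only semicontinuous in $f$, but a routine mollification in $r$ produces a continuous $\sigma : A \to (0, \infty)$ of the same qualitative size. I would then fix $\delta = \delta(R)$ small and collapse $\{\sigma(f) \geq \delta\}$ to the cone vertex, obtaining a continuous map into $\overline{\Sigma}_\delta$. Any maximizer $p_f$ at $r = \sigma(f)$ gives $T(p_f, \sigma(f); f) \geq \tau$, i.e.\ the first inequality of (b). For the second, the minimality of $\sigma(f)$ forces $T(q, \sigma(f)/2; f) < \tau$ for every $q \in \Sigma$; covering $B_{p_f}(R\sigma(f))$ by a bounded number $N_R \leq C R^2$ of balls of radius $\sigma(f)/2$ (valid for $\delta$ small so that $\Sigma$ is essentially Euclidean at that scale), a union bound yields $T(p_f, R\sigma(f); f) \leq C R^2 \tau$. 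Choosing $\tau$ small enough that $C R^2 \tau \leq 1 - \tau$ gives $\int_{B_{p_f}(R\sigma(f))^c} f\, dV_g \geq \tau$, which is the second inequality of (b).

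For the center I would embed $\Sigma$ isometrically into some $\mathbb{R}^N$, fix a smooth cutoff $\chi : [0, \infty) \to [0, 1]$ equal to $1$ on $[0, R]$ and supported in $[0, 3R + 1]$, and set
\[
\beta(f) = \Pi_\Sigma\!\left(\frac{\int_\Sigma \chi\bigl(d(x, p_f)/\sigma(f)\bigr)\, x\, f(x)\, dV_g(x)}{\int_\Sigma \chi\bigl(d(x, p_f)/\sigma(f)\bigr)\, f(x)\, dV_g(x)}\right),
\]
where $\Pi_\Sigma$ denotes nearest-point projection onto $\Sigma$, well defined in a tubular neighborhood provided $\delta$ is small. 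The Euclidean numerator lies within distance $(3R + 1)\sigma(f)$ of $p_f$ by support, and $\Pi_\Sigma$ is Lipschitz there, giving property (a) with $C' = \max\{3R + 1, \delta^{-1} \operatorname{diam}(\Sigma)\}$ (the second factor absorbs the transitional regime $\sigma(f) \sim \delta$).

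The main obstacle I foresee is choosing the witnessing point $p_f$ continuously in $f$: the naive argmax of $p \mapsto T(p, \sigma(f); f)$ may jump when the maximum is attained at two or more well-separated points. I would resolve this via a soft-max definition of $p_f$, weighted by a smooth function of $T(p, \sigma(f); f) - \tau/2$ and averaged through the Nash embedding before projecting back to $\Sigma$. Any genuinely multi-center configuration has $\sigma(f)$ comparable to the separation between lumps, so by tuning $\delta$ small relative to $R$ and $\tau$ such configurations satisfy $\sigma(f) \geq \delta$ and are already identified with the cone vertex, where $\beta$ does not need to be defined. The compatibility of the parameters $\tau$, $\delta$ and the covering constants is then just a matter of bookkeeping.
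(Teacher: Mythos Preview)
Your construction of $\sigma(f)$ and the covering argument for part (b) are reasonable, but there is a real gap in the construction of $\beta$: the claim that ``any genuinely multi-center configuration has $\sigma(f)$ comparable to the separation between lumps'' is false for your definition of $\sigma$. Consider $f$ equal to the normalized sum of two identical bumps of width $\epsilon$ centered at $p_1,p_2\in\Sigma$ with $d(p_1,p_2)=d\gg\epsilon$ (plus a negligible positive background so that $f\in A$). Each bump carries mass close to $1/2$, so $\max_p T(p,r;f)$ already exceeds your threshold $\tau$ for $r$ of order $\epsilon$; hence $\sigma(f)$ is of order $\epsilon$, not $d$, and can be made much smaller than any fixed $\delta$. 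The set $\{p:T(p,\sigma(f);f)>\tau/2\}$ then has two components near $p_1$ and $p_2$; your soft-max average in $\mathbb{R}^N$ lands near their midpoint (possibly outside the tubular neighborhood of $\Sigma$), and after feeding this $p_f$ into your localized center-of-mass formula you obtain a $\beta$ at distance of order $d/2$ from every point $p$ satisfying (b). Property (a) then fails, since $C'\sigma(f)\ll d$.

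The paper (following Malchiodi--Ruiz) circumvents exactly this difficulty by a different definition of scale. For each $x\in\Sigma$ one sets $\sigma(x,f)$ to be the unique radius at which $\int_{B_x(\sigma)}f=\int_{B_x(R_0\sigma)^c}f$, with $R_0=3R$, and then takes $\sigma(f)=3\min_x\sigma(x,f)$. This balanced definition yields the key distance estimate
\[
d(x,y)\le R_0\max\{\sigma(x,f),\sigma(y,f)\}+\min\{\sigma(x,f),\sigma(y,f)\},
\]
which forces any two points with small $\sigma(\cdot,f)$ to be close. Consequently the ``active set'' $S(f)=\{x:T(x,f)>\tau,\ \sigma(x,f)<\sigma(f)\}$ has diameter at most $(R_0+1)\sigma(f)$, and $\beta$ is defined as a weighted average with continuous weights $(T(x,f)-\tau)^+(\sigma(f)-\sigma(x,f))^+$, which are supported in $S(f)$. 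Continuity is then automatic (no argmax is ever selected), and the diameter bound gives (a) directly. Your scheme cannot be repaired by the soft-max alone; you would need either a mechanism forcing the near-maximizers of $T(\cdot,\sigma(f);f)$ to cluster within $C\sigma(f)$ of each other, or a different notion of $\sigma$ under which multi-bump configurations genuinely have large scale---which is precisely what the balanced definition above achieves.
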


\noindent This result is obtained in several steps, which we summarize in the sequel. The explicit definition of the map $\psi(f)= (\beta, \sigma)$ is given below. 

First, take $R_0=3R$, and define $ \sigma: A \times \Sigma \rightarrow (0,+\infty)$ such that:
\begin{equation}\label{def-sigma} 
\int_{B_x(\sigma(x,f))} f \, dV_g = \int_{B_x(R_0\sigma(x,f))^c} f \, dV_g. 
\end{equation}
The map $\sigma(x,f)$ is clearly uniquely determined and continuous. Moreover we have the following lemma.

\begin{lem}\emph{(\cite{mal-ruiz})}
The map $\sigma$ satisfies:
\begin{equation} \label{dist} 
d(x,y) \leq R_0 \max \bigr\{ \sigma(x,f), \sigma(y,f)\} +\min \{ \sigma(x,f), \sigma(y,f) \bigr\}.
\end{equation}
\end{lem}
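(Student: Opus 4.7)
The plan is to argue by contradiction. By the symmetry of the desired inequality I may assume $\sigma_x := \sigma(x,f) \leq \sigma(y,f) =: \sigma_y$, so that the target reduces to $d(x,y) \leq R_0 \sigma_y + \sigma_x$; I will suppose instead that $d(x,y) > R_0 \sigma_y + \sigma_x$ and seek a contradiction with the two defining identities \eqref{def-sigma} applied at $x$ and at $y$.

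First I would extract two set inclusions from this distance bound. The triangle inequality gives $B_x(\sigma_x) \subseteq B_y(R_0\sigma_y)^c$ directly: any $z\in B_x(\sigma_x)$ satisfies $d(y,z) \geq d(x,y) - \sigma_x > R_0 \sigma_y$. For the matching inclusion, for any $z \in B_y(\sigma_y)$ I compute
\[
d(x,z) \geq d(x,y) - d(y,z) > (R_0-1)\sigma_y + \sigma_x \geq R_0 \sigma_x,
\]
the last step using $R_0 > 1$ and $\sigma_x \leq \sigma_y$; hence $B_y(\sigma_y) \subseteq B_x(R_0 \sigma_x)^c$. Chaining these two inclusions with the two instances of \eqref{def-sigma} yields
\[
\int_{B_x(\sigma_x)} f\, dV_g \leq \int_{B_y(R_0\sigma_y)^c} f\, dV_g = \int_{B_y(\sigma_y)} f\, dV_g \leq \int_{B_x(R_0\sigma_x)^c} f\, dV_g = \int_{B_x(\sigma_x)} f\, dV_g,
\]
so equality holds throughout. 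In particular $\int_{B_x(R_0 \sigma_x)^c \setminus B_y(\sigma_y)} f\, dV_g = 0$, and the a.e.\ positivity $f > 0$ forces the set $B_x(R_0 \sigma_x)^c \setminus B_y(\sigma_y)$ to have zero Riemannian measure.

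The last and most delicate step will be to convert this null-measure statement into a geometric contradiction: the integral identities alone do not directly rule out the assumed separation of $x$ and $y$. I would pick a point $p$ on a minimizing geodesic joining $x$ to $y$ at distance $R_0 \sigma_x + \varepsilon$ from $x$, for some small $\varepsilon > 0$; then $p \in B_x(R_0\sigma_x)^c$, while the key inequality $d(x,y) > R_0 \sigma_y + \sigma_x \geq R_0 \sigma_x + \sigma_y$ (once more using $\sigma_x \leq \sigma_y$) ensures that $d(y,p) = d(x,y) - R_0\sigma_x - \varepsilon > \sigma_y$ for $\varepsilon$ small enough, so $p \notin \overline{B_y(\sigma_y)}$. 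By continuity of the distance function an entire open neighborhood of $p$ lies inside $B_x(R_0\sigma_x)^c \setminus B_y(\sigma_y)$, which has strictly positive Riemannian measure, contradicting the null-measure conclusion. The main obstacle is exactly this passage from the analytic identities to a concrete geometric witness: one must use the positivity of $f$ in an essential way, together with a carefully chosen test point on the $x$--$y$ geodesic.
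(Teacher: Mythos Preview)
Your argument is correct. The paper itself does not give a proof of this lemma, citing instead \cite{mal-ruiz}; your contradiction argument via the two ball inclusions, the chain of equalities forced by \eqref{def-sigma}, and the geodesic test point producing an open set of positive measure inside the null-measure region is a clean and complete justification.
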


We now define
$$
  T: A \times \Sigma \rightarrow \mathbb{R}, \qquad T(x,f) = \int_{B_x(\sigma(x,f))} f \,dV_g.
$$

\begin{lem}\emph{(\cite{mal-ruiz})}\label{sigma} 
If $x_0 \in \Sigma$ is such that $T(x_0,f) = \max_{y \in\Sigma} T(y,f)$, then we have $\sigma(x_0,f) < 3\,\sigma(x, f)$ for any other $x \in \Sigma$.
\end{lem}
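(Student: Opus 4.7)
The plan is to proceed by contradiction. Suppose there exists $x \in \Sigma$ with $x \neq x_0$ such that $\sigma(x_0,f) \geq 3\sigma(x,f)$, and set $s_0 = \sigma(x_0,f)$, $s = \sigma(x,f)$, so $s_0 \geq 3s > 0$. The goal is then to produce a point $y \in \Sigma$ with $T(y,f) > T(x_0,f)$, contradicting the maximality of $T(\cdot,f)$ at $x_0$.

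First I would invoke the distance estimate \eqref{dist} from the previous lemma, which yields
\[
d(x,x_0) \leq R_0 \max\{s_0,s\} + \min\{s_0,s\} = R_0 s_0 + s.
\]
This localizes $x$ at a distance of order $R_0 s_0$ from $x_0$. The intuition is that since the scale of concentration at $x$ is at least three times smaller than the scale at $x_0$, the function $f$ is more tightly concentrated near $x$ than near $x_0$, so a small ball around $x$ should capture at least as much mass as $B_{x_0}(s_0)$.

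The core of the argument is a careful comparison of the two balance identities
\[
T(x,f) = \int_{B_x(s)} f\,dV_g = \int_{B_x(R_0 s)^c} f\,dV_g, \qquad T(x_0,f) = \int_{B_{x_0}(s_0)} f\,dV_g = \int_{B_{x_0}(R_0 s_0)^c} f\,dV_g,
\]
together with the relations $s_0 \geq 3s$ and $R_0 = 3R$ with $R > 1$. Using the distance bound above, I would track the geometric inclusions among the four balls $B_x(s)$, $B_x(R_0 s)$, $B_{x_0}(s_0)$, $B_{x_0}(R_0 s_0)$ and the two annuli $A_x(s,R_0 s)$, $A_{x_0}(s_0,R_0 s_0)$, arguing that the small ball $B_x(s)$ (or a slight enlargement thereof) intercepts strictly more of the $f$-mass than does $B_{x_0}(s_0)$. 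The factor $3$ in the conclusion is calibrated precisely against $R_0 = 3R$ so that this comparison closes.

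The main obstacle will be the geometric bookkeeping: since $d(x,x_0)$ can be as large as $R_0 s_0 + s$, the balls $B_x(R_0 s)$ and $B_{x_0}(R_0 s_0)$ need not be nested in a simple way, so the direct inclusion $B_x(R_0 s)^c \supseteq B_{x_0}(R_0 s_0)^c$ may fail. My fallback is to apply an intermediate-value argument: rather than taking $y = x$, I would use the continuity of $\sigma(\cdot,f)$ and $T(\cdot,f)$ along a geodesic joining $x$ to $x_0$ to select a point $y$ on that geodesic where the ball configuration satisfies the required containment, so that the balance identities produce the strict inequality $T(y,f) > T(x_0,f)$ and complete the contradiction.
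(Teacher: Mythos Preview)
The paper does not supply its own proof of this lemma: it is stated with a bare citation to \cite{mal-ruiz} and no argument, so there is nothing in the paper to compare your proposal against. Any verification has to be made against the original Malchiodi--Ruiz paper.

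Judged on its own, your proposal sets up the contradiction correctly and invokes the right ingredients (the defining balance identity for $\sigma(\cdot,f)$, the distance estimate \eqref{dist}, and the positivity $f>0$ a.e.), but it does not actually close. The issue you flag yourself is the real one: from $d(x,x_0)\le R_0 s_0+s$ and $s_0\ge 3s$ none of the natural ball inclusions
\[
B_x(R_0 s)\subseteq B_{x_0}(R_0 s_0),\qquad B_x(s)\subseteq B_{x_0}(s_0),\qquad B_{x_0}(s_0)\subseteq B_x(R_0 s)^c
\]
follow, so the balance identities cannot be compared directly. Your fallback, ``slide along a geodesic from $x$ to $x_0$ and pick $y$ by continuity where the required containment holds,'' is not a proof as written: you have not said which containment you are aiming for, why such a $y$ must exist, or why the resulting inequality is strict. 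Until that step is made explicit (identifying precisely which inclusion of balls or annuli you need, and showing that the factor $3$ together with $R_0=3R$ forces it somewhere), the argument is only a heuristic. I would recommend consulting the proof in \cite{mal-ruiz} to see the exact geometric comparison they use; it is short, and the constant $3$ is tied to the choice $R_0=3R$ in a specific way that your sketch does not yet pin down.
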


As a consequence of the previous lemma, one can obtain the following:

\begin{lem}\emph{(\cite{mal-ruiz})}\label{tau} 
There exists a fixed $\tau > 0$ such that
$$
  \max_{x \in \Sigma} T(x,f) > \tau > 0 \qquad \hbox{ for all } f \in A.
$$
\end{lem}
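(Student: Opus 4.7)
The plan is to argue by contradiction. Since $T(\cdot,f)$ is continuous on the compact surface $\Sigma$, a maximizer $x_0$ exists; writing $\sigma_0 = \sigma(x_0,f)$ and $T_0 = T(x_0,f)$, I would fix a small $\tau>0$ and derive a lower bound on $\tau$ from the hypothesis $T_0\le\tau$, thereby establishing the claim with any strictly smaller universal constant.

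The first step is to propagate the bound at the maximizer to every point via Lemma \ref{sigma}: since $\sigma(x,f)>\sigma_0/3$ for all $x\in\Sigma$, one has $B_x(\sigma_0/3)\subseteq B_x(\sigma(x,f))$ and therefore
\[
\int_{B_x(\sigma_0/3)} f\, dV_g \;\le\; T(x,f) \;\le\; T_0 \;\le\; \tau
\qquad\text{for all } x\in\Sigma.
\]
The second step reads the balance \eqref{def-sigma} at $x_0$ to see that most of the mass lives inside $B_{x_0}(R_0\sigma_0)$: namely,
\[
\int_{B_{x_0}(R_0\sigma_0)} f\, dV_g \;=\; 1-T_0 \;\ge\; 1-\tau.
\]
The third step is to cover $B_{x_0}(R_0\sigma_0)$ by a controlled number of balls of radius $\sigma_0/3$ via a Vitali-type argument: extract a maximal disjoint family $\{B_{x_i}(\sigma_0/6)\}_{i=1}^N$ with centres in $B_{x_0}(R_0\sigma_0)$; by maximality the dilates $\{B_{x_i}(\sigma_0/3)\}_{i=1}^N$ cover $B_{x_0}(R_0\sigma_0)$, and summing the pointwise estimate over this cover gives $1-\tau\le N\tau$, so that $\tau\ge 1/(N+1)$.

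The main obstacle is therefore to show that the covering number $N$ is bounded by a constant depending only on $R$ (through $R_0=3R$) and on $\Sigma$, uniformly in $\sigma_0>0$. This is a volume-comparison argument on the compact surface: for $\sigma_0$ below a fixed scale determined by the injectivity radius and an upper curvature bound, disjointness of the packing balls combined with the two-sided estimate $\mathrm{vol}(B_p(r))\asymp r^2$ forces $N=O(R_0^2)$, whereas for $\sigma_0$ above that scale the target ball $B_{x_0}(R_0\sigma_0)$ equals $\Sigma$ and can be covered by a number of balls of radius $\sigma_0/3$ bounded purely in terms of the geometry of $\Sigma$. Once this uniform bound $N\le N(R,\Sigma)$ is in place, any choice $\tau<1/(N(R,\Sigma)+1)$ closes the contradiction.
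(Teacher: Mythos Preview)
The paper does not actually prove this lemma: it is quoted from \cite{mal-ruiz} and stated without argument, so there is no in-paper proof to compare against. Your proposal is correct and is in fact the standard argument one finds in \cite{mal-ruiz}: use Lemma~\ref{sigma} to bound every $\sigma(x,f)$ below by $\sigma_0/3$, read the balance relation \eqref{def-sigma} at the maximizer to trap mass $1-T_0$ inside $B_{x_0}(R_0\sigma_0)$, and then cover that ball by a number $N=N(R,\Sigma)$ of balls of radius $\sigma_0/3$, each carrying mass at most $T_0$, forcing $T_0\ge 1/(N+1)$. The uniform bound on $N$ via volume comparison at small scales and a fixed finite cover at large scales is exactly the right way to close the estimate.
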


Let us define
$$ \sigma : A \rightarrow \mathbb{R}, \qquad  \sigma(f)= 3 \min\bigr\{ \sigma(x,f): \ x \in \Sigma \bigr\},$$ 
which is obviously a continuous function. Given $\tau$ as in Lemma \ref{tau}, consider the set
\begin{equation} \label{defS}
  S(f) = \bigr\{ x \in \Sigma \; : \; T(x,f) > \tau,\ \sigma(x,f) <  \sigma(f) \bigr\},
\end{equation}
which is a nonempty open set for any $f\in A$, by Lemmas \ref{sigma} and \ref{tau}. Moreover, from \eqref{dist}, we have that
\begin{equation}\label{diamS} 
diam\bigr(S(f)\bigr) \leq (R_0+1)\sigma(f). 
\end{equation}
By the Nash embedding theorem, we can assume that $\Sigma \subset
\mathbb{R}^N$ isometrically, $N \in \mathbb{N}$. Take an open tubular neighborhood $\Sigma \subset U \subset \mathbb{R}^N$ of $\Sigma$, and $\delta>0$ small enough so that
\begin{equation}\label{co}
 co \bigr [ B_x\bigr((R_0+1)\delta\bigr)\cap \Sigma \bigr ] \subset
 U \quad \forall \, x \in \Sigma, 
\end{equation}
where $co$ denotes the convex hull in $\mathbb{R}^N$.

We define now
\[
  \eta(f) = \frac{\displaystyle \int_\Sigma \bigr(T(x,f) - \tau\bigr)^+ \bigr( \sigma(f) - \sigma(x,f) \bigr)^+ x  \,dV_g}{\displaystyle \int_\Sigma \bigr(T(x,f) - \tau\bigr)^+ \bigr( \sigma(f) - \sigma(x,f) \bigr)^+ \,dV_g}\in \mathbb{R}^N.
\]
The map $\eta$ defines a sort of center of mass in $\mathbb{R}^N$. Observe
that the integrands become nonzero only on the set $S(f)$. Moreover, whenever $\sigma(f) \leq \delta$, \eqref{diamS} and \eqref{co} imply that $\eta(f) \in U$, and so we can define
\[
\beta: \bigr\{f \in A:\ \sigma(f)\leq \delta \bigr\} \rightarrow \Sigma, \ \ \beta(f)= P
\circ \eta (f),
\]
where $P: U \rightarrow \Sigma$ is the orthogonal projection. 

Then the map $\psi(f)=\bigr(\beta(f), \sigma(f)\bigr)$ satisfies the conditions given by Proposition \ref{conc}. If $\sigma(f) \geq \delta$, $\beta$ is not defined. Observe that {\it a)} is then satisfied for any $\beta \in \Sigma$.

\begin{rem}
The above map $\psi(f)= (\beta, \sigma)$ gives us a center of mass of $f$ and its scale of concentration around that point. The identification in $\overline{\Sigma}_\delta$ is somehow natural, indeed, if $\sigma$ exceeds a certain positive constant, we do not have concentration at a point and so $\beta$ could not be defined.
\end{rem}

\smallskip

We next state an improved Moser-Trudinger inequality for functions $u\in H^1(\Sigma)$ such that both $e^u$ and $e^{-u}$ are concentrated at the same point with the same rate of concentration. In terms of Proposition \ref{conc}, we have the following result.

\begin{prop} \label{mt-improved}
Given any $\varepsilon>0$, there exist $R=R(\varepsilon)>1$ and $\psi$ as given in Proposition \ref{conc}, such that for any $u \in H^1(\Sigma)$ with:

$$\psi \left( \frac{e^{u}}{\int_{\Sigma} e^{u} dV_g} \right
)= \psi \left( \frac{e^{-u}}{\int_{\Sigma} e^{-u} dV_g} \right
),
$$
the following inequality holds:
\[ \log \int_\Sigma e^{u-\bar{u}} \,dV_g +\log
\int_\Sigma e^{-u+\bar{u}} \,dV_g \leq \frac{1}{32\pi-\varepsilon}\int_\Sigma |\nabla_g u|^2 \,dV_g + C, \]
for some $C=C(\varepsilon)$.
\end{prop}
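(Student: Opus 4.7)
The strategy is the classical doubling mechanism for Moser--Trudinger inequalities in the spirit of Proposition \ref{mt-imp}: locate two disjoint subdomains of $\Sigma$ on each of which both $e^u$ and $e^{-u}$ carry a definite fraction of their $L^1$ mass, apply a local Moser--Trudinger inequality on each, then sum and exploit disjointness of the underlying Dirichlet regions to double the leading constant from $1/(16\pi-\varepsilon)$ to $1/(32\pi-\varepsilon)$. The novelty here is \emph{scale-invariance}: the two regions will only be separated at the scale $\sigma$, so the additive constant in the local estimate must not degenerate as $\sigma\to 0$.

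First I would apply Proposition \ref{conc} with a large parameter $R=R(\varepsilon)>1$, separately to $f_1=e^u/\int_\Sigma e^u\,dV_g$ and $f_2=e^{-u}/\int_\Sigma e^{-u}\,dV_g$. The hypothesis $\psi(f_1)=\psi(f_2)=(\beta,\sigma)$ provides a common center $\beta$ and scale $\sigma$, while property (b) yields (possibly distinct) points $p_i$ with $d(p_i,\beta)\leq C'\sigma$ and inner/outer mass bounds $\int_{B_{p_i}(\sigma)}f_i\,dV_g>\tau$, $\int_{B_{p_i}(R\sigma)^c}f_i\,dV_g>\tau$. From these I would engineer two disjoint regions $U_1,U_2\subset\Sigma$, built from balls and complements of balls centered at $\beta$ with radii proportional to $\sigma$, such that $\int_{U_j}f_i\,dV_g$ is bounded below by a positive constant (depending only on $\tau$) for $i,j=1,2$ --- using triangle-inequality transfer between $p_i$-balls and $\beta$-balls for the inner region, and a covering/pigeonhole argument to manufacture a common outer region carrying mass of both $f_1$ and $f_2$. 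Slightly fattened versions $\tilde U_1\supset U_1$, $\tilde U_2\supset U_2$ with $\sigma$-independent aspect ratios and $\tilde U_1\cap\tilde U_2=\emptyset$ will serve as the Dirichlet-energy regions.

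On each pair $(U_i,\tilde U_i)$ I would then apply a scale-invariant variant of Proposition \ref{mt-local} of the form
\[
\log\int_{U_i}e^{u}\,dV_g+\log\int_{U_i}e^{-u}\,dV_g\leq\frac{1}{16\pi-\varepsilon/2}\int_{\tilde U_i}|\nabla_g u|^2\,dV_g+2\log|U_i|+C_\varepsilon,
\]
with $C_\varepsilon$ depending on $\varepsilon$ and the fixed aspect ratios but not on $\sigma$. The mass bounds from the previous step convert each such estimate into an upper bound on $\log\int_\Sigma e^u\,dV_g+\log\int_\Sigma e^{-u}\,dV_g$ at the cost of an additive $-2\log\tau$. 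Summing over $i=1,2$, using disjointness to bound $\int_{\tilde U_1}|\nabla_g u|^2\,dV_g+\int_{\tilde U_2}|\nabla_g u|^2\,dV_g\leq\int_\Sigma|\nabla_g u|^2\,dV_g$, discarding the non-positive term $2\log(|U_1||U_2|)$, dividing by $2$, and noting that the $\bar u$ contributions cancel between $\log\int_\Sigma e^{u-\bar u}\,dV_g$ and $\log\int_\Sigma e^{-u+\bar u}\,dV_g$, delivers the claimed inequality.

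The main obstacle is the scale-invariance asserted in the displayed estimate, since the statement of Proposition \ref{mt-local} has an additive constant $C(\varepsilon,\delta)$ depending on the separation $\delta$, which here scales like $\sigma$ and could blow up as $\sigma\to 0$. The remedy is to rerun the spectral-decomposition and cutoff argument sketched in its proof after passing to isothermal coordinates near $\beta$ and dilating by $1/\sigma$: in two dimensions the Dirichlet energy is conformally invariant, and after rescaling the pairs $(U_i,\tilde U_i)$ become fixed-shape Euclidean configurations for which the additive constant depends only on the preassigned aspect ratios. A secondary difficulty is the construction of the common outer region in step~2, where one genuinely exploits the common-center-and-scale hypothesis $\psi(f_1)=\psi(f_2)$ to force the outer tails of $f_1$ and $f_2$ to overlap in a set of non-negligible mass for each.
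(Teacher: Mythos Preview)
Your doubling strategy is the right picture, and your inner region works exactly as you say: rescaling a ball $B_\beta(c\sigma)$ to unit size produces Lemma~\ref{ball}, with the additive term $4\log\sigma$ (i.e.\ your $2\log|U_1|$). The gap is in the outer region. You take $U_2$ to be built from complements of balls of radius $\sim\sigma$, hence $U_2$ is essentially an annulus $A_\beta(R'\sigma,r_0)$ with $r_0$ of order $\mathrm{diam}(\Sigma)$. After dilating by $1/\sigma$ this becomes $A_0(R',r_0/\sigma)$, which is \emph{not} a fixed-shape configuration: its outer radius diverges as $\sigma\to 0$. So your claim that ``after rescaling the pairs $(U_i,\tilde U_i)$ become fixed-shape Euclidean configurations'' is false for $i=2$, and the scale-invariant local estimate you display is not established for that region. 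Rerunning the cutoff argument directly on $A_\beta(R'\sigma,r_0)\subset A_\beta(R''\sigma,r_0)$ also fails: the cutoff satisfies $|\nabla\chi|\sim 1/\sigma$ on a transition annulus of area $\sim\sigma^2$, but one only controls $\int w^2$ by a Poincar\'e constant of order one, producing an error $\sim\sigma^{-2}\int|\nabla w|^2$ that blows up.

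The paper's proof handles the outer region by a genuinely different device: the Kelvin transform $K(x)=p+rs\,(x-p)/|x-p|^2$, which maps the annulus $A_p(s,r)$ to itself while swapping inner and outer boundaries. Composing $u$ with $K$ and extending by a constant into $B_p(s/2)$ (this is why one first modifies $u$ to be constant on $\partial B_p(2r)$ via a dyadic good-annulus choice), one lands in a \emph{fixed} ball $B_p(2r)$, applies Proposition~\ref{mt-local} there, and the Jacobian of $K$ produces an additive $-4\log s$ term (Lemma~\ref{mt-annulus}). Note the sign: the annulus estimate carries $-4\log s$, opposite to the $+4\log s$ from Lemma~\ref{ball} on the inner ball. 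Summing the two cancels the $\log s$ contributions exactly, which is how scale-invariance is achieved. In your scheme the $2\log|U_1|\sim 4\log\sigma$ term has nothing to cancel against, and you propose to simply discard it; that would be fine \emph{if} the outer estimate held with a uniformly bounded constant, but that is precisely the unproven step. You should replace your generic scale-invariant local inequality on $U_2$ by the Kelvin-transform Lemma~\ref{mt-annulus}, and then carry out the several subcases (on the relative position of $p_1,p_2$ and on where the outer mass actually sits) as in the paper.
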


Before proving the proposition, we need some preliminary lemmas concerning Moser-Trudinger type inequality for small balls, and also for annuli with small internal radius. The first one is obtained just by using a dilation argument.

\begin{lem} \label{ball} 
For any $\varepsilon>0$ there exists $C=C(\varepsilon)>0$ such that
\[  \log \int_{B_p(s/2)} e^{u}\,dV_g + \log \int_{B_p(s/2)} e^{-u}\,dV_g  \,\leq\, \frac{1}{16\pi-\varepsilon}\int_{B_p(s)}|\nabla_g u|^2\,dV_g +4\log s+ C \]
for any $u \in H^1(\Sigma), \ p \in \Sigma$, $s>0$ small.
\end{lem}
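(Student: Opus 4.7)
The plan is to reduce the lemma, via a dilation argument, to a Moser--Trudinger inequality on a fixed unit-size ball, where all constants are universal; the $4\log s$ summand will then arise from the Jacobian of the dilation, exploiting the fact that the Dirichlet integral is scale-invariant in dimension two.

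First, I would pass to normal coordinates at $p$, identifying $B_p(s)$ with the Euclidean ball $B_0(s) \subset T_p \Sigma$ via $\exp_p$. For $s$ small, the pulled-back metric satisfies $g_{ij}(x) = \delta_{ij} + O(|x|^2)$, so that $dV_g = (1+O(s^2))\,dx$ on $B_p(s)$ and the pointwise Riemannian gradient norm differs from the Euclidean one by a factor $1+O(s^2)$. Setting $v(y) = u(\exp_p(sy))$ for $y \in B_0(1)$, the scale invariance of the Dirichlet integral in two dimensions yields
\[
\int_{B_0(1)} |\nabla v|^2 \, dy = (1 + O(s^2)) \int_{B_p(s)} |\nabla_g u|^2 \, dV_g,
\]
whereas the change of variables in the volume form contributes a factor $s^2$:
\[
\int_{B_p(s/2)} e^{\pm u} \, dV_g = (1 + O(s^2)) \, s^2 \int_{B_0(1/2)} e^{\pm v} \, dy.
\]
Taking logarithms in the latter produces exactly $2\log s$ from each of the two exponential integrals, accounting for the $4\log s$ term in the thesis.

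It then remains to establish the unit-scale inequality
\[
\log \int_{B_0(1/2)} e^{v} \, dy + \log \int_{B_0(1/2)} e^{-v} \, dy \leq \frac{1}{16\pi - \varepsilon} \int_{B_0(1)} |\nabla v|^2 \, dy + C(\varepsilon)
\]
with a universal constant $C(\varepsilon)$. This can be obtained by exactly the spectral-plus-cutoff argument used in the proof of Proposition \ref{mt-local}: decompose $v = v_0 + w$ on $B_0(1)$ with $v_0 \in L^\infty$ and $w \in H^1$ of controlled norm, multiply $w$ by a fixed cutoff $\chi$ equal to $1$ on $B_0(1/2)$ and supported in $B_0(1)$, and apply the global inequality \eqref{mt2} to the extension of $\chi w$ on a fixed auxiliary compact surface. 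Since the domains, the cutoff and the auxiliary surface are all independent of $s$, the resulting constant is universal.

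The main technical point is controlling the $O(s^2)$ metric perturbation arising from the dilation: it is multiplicative and tends to $1$ as $s \to 0$, so it can be absorbed into the $\varepsilon$-loss already built into the constant $16\pi - \varepsilon$, at the cost of slightly enlarging $C(\varepsilon)$. Combining the unit-scale inequality with the two scaling identities above then gives the thesis.
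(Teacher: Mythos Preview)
Your proposal is correct and follows essentially the same approach as the paper: pass to normal coordinates at $p$, perform the dilation $v(y)=u(sy+p)$ so that the Dirichlet energy is scale-invariant while each exponential integral picks up a factor $s^2$, and then apply the fixed-scale local Moser--Trudinger inequality (Proposition~\ref{mt-local}) on $B_0(1/2)\subset B_0(1)$. The paper simply declares the metric errors negligible and omits them, whereas you track them explicitly as $1+O(s^2)$ factors to be absorbed into the $\varepsilon$-loss; this is the only (cosmetic) difference.
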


\begin{proof}
Notice that, as $s \to 0$ we consider quantities defined on smaller and smaller geodesic balls $B_p(\xi)$ on $\Sigma$. By considering normal geodesic coordinates at $p$, gradients, averages and the volume element will almost correspond to the Euclidean ones. If we assume that near $p$ the metric of $\Sigma$ is flat, we will get negligible error terms which will be omitted.

\smallskip

We just perform a convenient dilation of $u$ given by
\[ v(x)= u(s x+ p). \]
We have the following equalities:
\[ \int_{B_p(s)}|\nabla_g u|^2 \,dV_g = \int_{B_0(1)}|\nabla_g v|^2 \,dV_g, \]
\[ \int_{B_p(s/2)}e^u \,dV_g = s^2 \int_{B_0(1/2)}e^v \,dV_g. \]
We apply then Proposition \ref{mt-local} to the function $v$ to deduce the desired inequality. 
\end{proof}

\begin{rem}
Observe that in Lemma \ref{ball} and in the results that will be present in the sequel there is no explicit dependence of the average of $u$, due to the fact that the average of $u$ is cancelled by the average of $-u$.
\end{rem}

We next deduce a Moser-Trudinger type inequality on thick annuli. In order to do this, we use the Kelvin transform to exploit the geometric properties of the problem.

\begin{lem} \label{mt-annulus} 
Given $\varepsilon>0$, there exists a fixed $r_0>0$ (depending only on $\Sigma$ and $\varepsilon$) satisfying the following property: for any $r\!\in \!(0,r_0)$ fixed, there exists \mbox{$C\!=C(r,\varepsilon)>\!0$} such that, for any $u\in H^1(\Sigma)$ with $u=c\in\mathbb{R}$ in $\partial B_p(2 r)$,
\[ 
\log \int_{A_p(s, r)} \!\!e^{u} \,dV_g + \log \int_{A_p(s,r)} \!\!e^{-u} \,dV_g \leq  \frac{1}{16\pi-\varepsilon}\int_{A_p(s/2,2 r)}\!\!|\nabla_g u|^2 \,dV_g - 4\log s + C, 
\]
with $\ p \in \Sigma$, $s\in(0,r)$.
\end{lem}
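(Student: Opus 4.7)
The plan is to reduce the estimate to the local Moser--Trudinger inequality of Proposition~\ref{mt-local} via a Kelvin inversion that turns the annulus $A_p(s,r)$ into one of bounded shape; the surplus $-4\log s$ will appear as the Jacobian contribution of the inversion. As a first move I subtract $c$ from $u$, which leaves the left-hand side unchanged (the $c$ in $e^u$ is cancelled by the $-c$ in $e^{-u}$), and work in normal geodesic coordinates at $p$, treating $u$ as a Euclidean $H^1$-function on $B(2r)$ with $u\equiv 0$ on $\partial B(2r)$; as in the proof of Lemma~\ref{ball}, the metric corrections are negligible for $r<r_0$.

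The main ingredient is the involution $\Phi(z)=sz/|z|^2$, which is conformal in the two-dimensional sense and thus preserves the Dirichlet integral. Setting $w(z)=u(\Phi(z))$, the identity $|x||z|=s$ shows that $\Phi$ sends $A(s/r,1)$, $A(s/(2r),2)$, and $\partial B(s/(2r))$ respectively onto $A(s,r)$, $A(s/2,2r)$, and $\partial B(2r)$; in particular $w\equiv 0$ on $\partial B(s/(2r))$. Conformal invariance of the Dirichlet integral in dimension two immediately yields
\[
\int_{A(s/(2r),2)} |\nabla w|^2\,dz \;=\; \int_{A(s/2,2r)} |\nabla u|^2\,dx,
\]
while the Jacobian $dx=(s^2/|z|^4)\,dz$ combined with the bound $|z|\geq s/r$ on $A(s/r,1)$ gives, for both signs,
\[
\log\int_{A(s,r)} e^{\pm u}\,dx \;\leq\; \log\int_{A(s/r,1)} e^{\pm w}\,dz + 4\log r - 2\log s;
\]
summing the two cases produces exactly the surplus $-4\log s+8\log r$ in passing from the $u$-integrals to the $w$-integrals.

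To close the argument I will extend $w$ by zero inside $B(s/(2r))$, which is legitimate because $w\equiv 0$ on $\partial B(s/(2r))$, thereby obtaining $\tilde w\in H^1(B(2))$. Applying the Euclidean analogue of Proposition~\ref{mt-local} with the fixed choice $\Omega_1=B(1)$, $\Omega_2=B(2)$, $\delta=1$, and noting that the trivial extension only adds the bounded quantity $\pi(s/(2r))^2$ to each of $\int_{B(1)} e^{\pm\tilde w}\,dz$, one gets
\[
\log\int_{A(s/r,1)} e^{w}\,dz+\log\int_{A(s/r,1)} e^{-w}\,dz \;\leq\; \frac{1}{16\pi-\varepsilon}\int_{A(s/2,2r)} |\nabla u|^2\,dx + C(\varepsilon).
\]
Combining this with the inequalities of the previous paragraph and absorbing the fixed term $8\log r$ into the constant yields the lemma with $C=C(r,\varepsilon)$. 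The most delicate step is the bookkeeping of the weight $s^2/|z|^4$ so that the Kelvin transform produces exactly the coefficient $-4$ in front of $\log s$; equally important is the choice to extend $w$ by zero rather than by a Kelvin reflection of $u$ across $\partial B_p(s)$, since the latter would double the Dirichlet energy on the inner piece and inflate the Moser--Trudinger constant by a factor of two.
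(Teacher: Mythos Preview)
Your proof is correct and follows essentially the same strategy as the paper: reduce to the local Moser--Trudinger inequality of Proposition~\ref{mt-local} via a Kelvin inversion, with the Jacobian producing the $-4\log s$ term and the boundary condition $u=c$ on $\partial B_p(2r)$ allowing an $H^1$ extension by a constant across the inner boundary. The only cosmetic difference is the scaling of the inversion --- the paper uses $K(x)=p+rs\,(x-p)/|x-p|^2$, an involution of $A_p(s/2,2r)$ onto itself, while you use $\Phi(z)=sz/|z|^2$, landing in an annulus of fixed outer radius $2$ --- but these differ only by a dilation by $r$, and the rest of the argument is identical.
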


\begin{proof}
As in the proof of Lemma \ref{ball}, by taking $r_0$ small enough,
also here the metric becomes close to the Euclidean one. We can then assume that the metric is flat around $p$.

\smallskip

We consider the Kelvin transform $ K : A_p(s/2, 2r) \rightarrow A_p(s/2, 2r)$ given by
\[ K(x)= p+ r s \frac{x-p}{\ |x-p|^2}. \]
Observe that $K$ maps the interior boundary of $A_p(s/2, 2r)$ onto the exterior one and viceversa. We next define the function $\tilde{u}\in H^1(B_p(2r))$ as:
\[ 
\tilde{u}(x)= \left \{ \begin{array}{ll} u\bigr(K(x)\bigr)  & \mbox{ if } |x-p| \geq s/2, \\
c & \mbox{ if } |x-p| \leq s/2. \end{array} \right. 
\]
Our goal is to apply the local Moser-Trudinger inequality given by Proposition \ref{mt-local} to $\tilde{u}$. First of all, observe that
\begin{equation} \label{exp} 
\int_{A_p(s, r)} e^{\tilde{u}} \,dV_g =  \int_{A_p(s,r)} e^{u(K(x))} \,dV_g = \int_{A_p(s, r)} e^{u(x)} \frac{s^2 r^2}{|x-p|^4} \,dV_g, 
\end{equation} 
since the Jacobian of $K$ is $J\bigr(K(x)\bigr) = - r^2 s^2 |x-p|^{-4}$. Moreover, for $|x-p|\geq s/2$, we have
\begin{equation} |\nabla_g \tilde{u}(x)|^2 = |\nabla_g u(K(x))|^2 \frac{s^2 r^2}{|x-p|^4} \label{grad}\end{equation}
Therefore,
\[ \hspace{-0.9cm} \log \int_{A_p(s,r)} e^u \,dV_g + \log \int_{A_p(s,r)} e^{-u} \,dV_g +4\log s \,\,\, = \]
\begin{eqnarray*}
      &  =   & \log \int_{A_p(s,r)} e^u s^2 \,dV_g + \log \int_{A_p(s,r)} e^{-u} s^2 \,dV_g \\
      & \leq & \log \int_{A_p(s,r)} e^u\, \frac{\,s^2}{\,r^2} \,dV_g + \log \int_{A_p(s,r)} e^{-u}\, \frac{\,s^2}{\,r^2} \,dV_g + C \\
      & \leq & \log \int_{A_p(s,r)} e^u\, \frac{s^2 r^2}{|x-p|^4} \,dV_g + \log \int_{A_p(s,r)} e^{-u}\, \frac{s^2 r^2}{|x-p|^4} \,dV_g + C, 
\end{eqnarray*}
where we have used the trivial inequality $r \geq |x-p|$ for $x\in A_p(s,r)$. By using (\ref{exp}), applying Proposition \ref{mt-local} to $\tilde{u}$ and then using (\ref{grad}), we have
\[ \log \int_{A_p(s,r)} e^u\, \frac{s^2 r^2}{|x-p|^4} \,dV_g + \log \int_{A_p(s,r)} e^{-u}\, \frac{s^2 r^2}{|x-p|^4} \,dV_g + C \,\,\, = \]
\begin{eqnarray*}
   &   =  & \log \int_{A_p(s,r)} e^{u(K(x))} \,dV_g + \log \int_{A_p(s,r)} e^{-u(K(x))} \,dV_g + C \\
   & \leq & \frac{1}{16\pi-\varepsilon} \int_{B_p(2r)}|\nabla_g \tilde{u}|^2 \,dV_g + C = \frac{1}{16\pi-\varepsilon} \int_{A_p(s/2,2r)}|\nabla_g \tilde{u}|^2 \,dV_g + C \\
   &  =   & \frac{1}{16\pi-\varepsilon} \int_{A_p(s/2,2r)}|\nabla_g u(K(x))|^2 \frac{r^2 s^2}{\,\,|x-p|^4} \,dV_g + C \\
   &  =   & \frac{1}{16\pi-\varepsilon} \int_{A_p(s/2,2r)}|\nabla_g u|^2 \,dV_g + C.
\end{eqnarray*}
This concludes the proof of the lemma.
\end{proof}

\begin{rem}
We are now able to prove the improved inequality given in Proposition \ref{mt-improved}. The spirit of the proof is to use jointly Lemmas \ref{ball} and \ref{mt-annulus}. Indeed, assume that $e^u$ and $e^{-u}$ concentrate around the same point at the same rate (in the sense of Proposition \ref{conc}). If we sum the inequalities given by Lemmas \ref{ball} and \ref{mt-annulus}, the extra term $4\log s$ cancels and we can deduce the improved inequality of Proposition \ref{mt-improved}.

We have to manage the case that when $\psi \left( \frac{e^{u}}{\int_{\Sigma} e^{u} dV_g} \right)= \psi \left( \frac{e^{-u}}{\int_{\Sigma} e^{-u} dV_g} \right)$ we do not really have concentration around the same point. Moreover, the property in Lemma \ref{mt-annulus} of $u$ being constant on the boundary of a ball need not be satified.
\end{rem}

\smallskip

\begin{proof}[Proof of Proposition \ref{mt-improved}]
Fixed $\varepsilon>0$, take $R>1$ (depending only on $\varepsilon$) and let $\psi$ be the continuous map given by Proposition \ref{conc}. Fix also $\delta>0$ small. 

Let $u\in H^1(\Sigma)$ be a function with $\int_\Sigma u\,dV_g=0$, such that
\[
\psi \left( \frac{e^u}{\int_{\Sigma} e^u \,dV_g} \right)= \psi \left( \frac{e^{-u}}{\int_{\Sigma} e^{-u} \,dV_g} \right)= (\beta, \sigma) \in \overline{\Sigma}_{\delta}. 
\]
If $\sigma \geq \frac{\delta}{R^2}$, then applying Proposition \ref{mt-imp} we get the result. Therefore, assume $\sigma < \frac{\delta}{R^2}$. Proposition \ref{conc} implies the existence of $\tau>0$, $p_1,\ p_2 \in \Sigma$ satisfying:
\begin{equation}  \label{dentro}
\int_{B_{p_1}(\sigma)} e^u \,dV_g \geq \tau \int_\Sigma
e^u \,dV_g, \qquad\hspace{0.1cm} \int_{B_{p_2}(\sigma)} e^{-u} \,dV_g \geq \tau \int_\Sigma
e^{-u} \,dV_g
\end{equation}
and
\begin{equation} \label{fuori}
\int_{B_{p_1}(R \sigma)^c} e^u \,dV_g \geq \tau \int_\Sigma e^u \,dV_g \qquad \int_{B_{p_2}(R \sigma)^c} e^{-u} \,dV_g \geq \tau \int_\Sigma e^{-u} \,dV_g ,
\end{equation}
with $d(p_1, p_2) \leq (6R+2) \sigma$. We divide the proof into two cases:

\medskip

\noindent {\bf CASE 1:} Assume that
\begin{equation} \label{caso1} 
\int_{A_{p_1}(R\sigma, \delta)} e^{u} \,dV_g \geq \tau/2 \int_{\Sigma} e^{u}
\,dV_g, \qquad \int_{A_{p_2}(R\sigma, \delta)} e^{-u} \,dV_g \geq \tau/2 \int_{\Sigma} e^{-u}
\,dV_g.
\end{equation}
In order to satisfy the hypothesis of Lemma \ref{mt-annulus}, we need to modify our function outside a certain ball. Via a dyadic decomposition, choose $k \in \mathbb{N}$, $k \leq 2 \varepsilon^{-1}$, such that
\[ 
\int_{A_{p_1}(2^{k-1} \delta, 2^{k+1} \delta)} |\nabla u|^2 \, dV_g \leq \varepsilon \int_{\Sigma} |\nabla u|^2 \, dV_g.
\]
We define $\tilde{u} \in H^1(\Sigma)$ by:
\[ 
\left \{ \begin{array}{ll} \tilde{u}(x) = u(x) & x \in B_{p_1}(2^k \delta), \\ \Delta \tilde{u}(x) =0 & x \in A_{p_1}(2^k \delta, 2^{k+1} \delta), \\ \tilde{u}(x) = c & x \notin B_{p_1}(2^{k+1} \delta),
         \end{array} \right.  
\]
where $c\in\mathbb{R}$. Moreover, since we want to apply Lemma \ref{mt-annulus} to $\tilde{u}$, we have to choose $\delta$ small enough so that $2^{3\varepsilon^{-1}}\delta < r_0$, where $r_0$ is given by that lemma.
 
We have that 
\begin{equation}\label{modify}
\begin{array}{ccl}
\displaystyle \int_{A_{p_1}(2^{k-1} \delta, 2^{k+1} \delta)} |\nabla \tilde{u}|^2 \, dV_g & \leq & \displaystyle C\int_{A_{p_1}(2^{k-1} \delta, 2^{k+1} \delta)} |\nabla u|^2 \, dV_g \\ \\
\displaystyle & \leq  & \displaystyle C \varepsilon \int_{\Sigma} |\nabla u|^2 \, dV_g,
\end{array}
\end{equation}
for some universal constant $C>0$.

\medskip

\noindent {\bf Case 1.1:} Suppose that $d(p_1, p_2) \leq R^{\frac 12} \sigma$.

\medskip

We first apply Lemma \ref{ball} to $u$ for $p=p_1$ and $s= 2(R^{1/2}+1)\sigma$, and take into account \eqref{dentro}, to obtain:
\begin{eqnarray} 
&& \frac{1}{16\pi-\varepsilon} \int_{B_p(s)}|\nabla u|^2 \,dV_g \,\,\,\geq
\nonumber \\ \nonumber \\
&\geq &\log  \int_{B_p(s/2)} e^{u} \,dV_g +\log \int_{B_p(s/2)}
e^{-u} \,dV_g - 4 \log \sigma - C \nonumber \\ \nonumber \\
&\geq& \log  \int_{\Sigma} e^{u} \,dV_g +\log \int_{\Sigma}
e^{-u} \,dV_g  - 4 \log \sigma - C. \label{dentro1} 
\end{eqnarray}
We next apply Lemma \ref{mt-annulus} to $\tilde{u}$ for $p=p_1$, $s'=4(R^{1/2}+1)\sigma$ and $r = 2^{k+1}\delta$:
\begin{equation}\label{fuori1.1}
\begin{array}{c}
\displaystyle \frac{1}{16\pi-\varepsilon}\int_{A_p(s'/2,2 r)}|\nabla_g \tilde{u}|^2 \,dV_g \,\,\, \geq \\ \\ 
\displaystyle \geq \,\,\, \log \int_{A_p(s', r)} e^{\tilde{u}} \,dV_g + \log \int_{A_p(s',r)} e^{-\tilde{u}} \,dV_g + 4\log \sigma - C. 
\end{array}
\end{equation}
Using the estimate \eqref{fuori}, we get 
\begin{equation} \label{fuori1}
\begin{array}{c}
\displaystyle \frac{1}{16\pi-\varepsilon}\int_{A_p(s'/2,2 r)}|\nabla_g \tilde{u}|^2 \,dV_g \,\,\, \geq \\ \\ 
\displaystyle \geq \,\,\, \log \int_\Sigma e^{u} \,dV_g + \log \int_\Sigma e^{-u} \,dV_g + 4\log \sigma - C. 
\end{array}
\end{equation}
Finally, combining \eqref{dentro1}, \eqref{fuori1} and \eqref{modify} we obtain our thesis (after renaming $\varepsilon$ conveniently).

\medskip

\noindent {\bf Case 1.2:} Suppose $d(p_1, p_2) \geq R^{\frac 12} \sigma$
and 
\[ 
\int_{B_{p_1}(R^{1/3} \sigma)} e^{-u}\, dV_g
\geq \tau/4 \int_\Sigma e^{-u} \,dV_g.
\]

\medskip

Here we argue as in Case 1.1. First, we apply Lemma \ref{ball} to $u$ for $p=p_1$ and $s = 2 (R^{1/3}+1)\sigma$. Then we use Lemma \ref{mt-annulus} with $\tilde{u}$ for $p=p_1$, $s'= 4 (R^{1/3}+1)\sigma$ and $r=2^{k+1} \delta$.

\medskip

\noindent {\bf Case 1.3:} Suppose $d(p_1, p_2) \geq R^{\frac 12} \sigma$
and  
\[
\int_{B_{p_2}(R^{1/3} \sigma)} e^{u} \,dV_g
\geq \tau/4 \int_\Sigma e^{u} \,dV_g.
\]

\medskip

This case can be treated as in Case 1.2, just  by interchanging the
indices.

\medskip

\noindent {\bf Case 1.4:} Suppose $d(p_1, p_2) \geq R^{\frac 12} \sigma$ and
\[ 
\int_{B_{p_2}(R^{1/3} \sigma)} e^{u} \,dV_g \leq
\tau/4 \int_\Sigma e^{u} \,dV_g, \qquad
\int_{B_{p_1}(R^{1/3} \sigma)} e^{-u} \,dV_g \leq \tau/4 \int_\Sigma
e^{-u} \,dV_g.
\]

\medskip

Take $n \in \mathbb{N}$, $n \leq 2 \varepsilon^{-1}$ so that
\[ 
\sum_{i=1}^2 \int_{A_{p_i}(2^{n-1} \sigma, 2^{n+1} \sigma )} |\nabla u|^2 \, dV_g \leq \varepsilon \int_{\Sigma} |\nabla u|^2 \, dV_g, 
\] 
where we have chosen $R$ such that $2^{3\varepsilon^{-1}} <R^{1/3}$. We define now the function \mbox{$v\in H^1(\Sigma)$} by:
\[
\left \{ \begin{array}{ll} v(x) = u(x) & x \in B_{p_1}(2^{n} \sigma) \cup B_{p_2}(2^{n} \sigma), \\
\Delta v(x) =0 & x \in A_{p_1}(2^{n} \sigma, 2^{n+1} \sigma) \cup A_{p_2}(2^{n}\sigma, 2^{n+1} \sigma), \\
v(x) = 0 & x \notin B_{p_1}(2^{n+1} \sigma) \cup B_{p_2}(2^{n+1}
\sigma).
\end{array} \right.  
\]
As before we have that
\begin{eqnarray*}
\sum_{i=1}^2 \int_{A_{p_i}(2^{n} \sigma, 2^{n+1} \sigma)} |\nabla v|^2 \, dV_g & \leq & C \sum_{i=1}^2 \int_{A_{p_i}(2^{n-1}  \sigma, 2^{n+1} \sigma)} |\nabla u|^2 \, dV_g \nonumber\\ \nonumber\\
& \leq & C \varepsilon \int_{\Sigma} |\nabla u|^2 \, dV_g,  
\end{eqnarray*}
where $C>0$ is a universal constant. 

Taking into account \eqref{dentro}, we now apply Lemma \ref{ball} to $v$ with $p=p_1$ and \mbox{$s=4(6R+2)\sigma$}:
\[
\frac{1}{16\pi-\varepsilon}\int_{ B_{p_1}(2^{n} \sigma) \cup B_{p_2}(2^{n} \sigma)} |\nabla u|^2 \,dV_g + C \varepsilon \int_{\Sigma} |\nabla u|^2 \, dV_g \,\,\, \geq \] \[
\geq \,\,\, \frac{1}{16\pi-\varepsilon}\int_{B_p(s)}|\nabla v|^2 \,dV_g
\]
\begin{eqnarray}
& \geq & \log\int_{B_p(s/2)} e^{v} \,dV_g +\log \int_{B_p(s/2)} e^{-v}
\,dV_g - 4 \log \sigma - C  \nonumber\\ \nonumber\\ 
& \geq &\log\int_{\Sigma} e^{u} \,dV_g + \log \int_{\Sigma} e^{-u} \,dV_g - 4 \log \sigma -C. \label{dentro2}
\end{eqnarray} 
Next, we define $w \in H^1(\Sigma)$ by:
\[
\left \{ \begin{array}{ll} w(x) = 0 & x \in B_{p_1}(2^{n} \sigma) \cup B_{p_2}(2^{n} \sigma), \\
\Delta w(x) =0 & x \in A_{p_1}(2^{n} \sigma, 2^{n+1} \sigma) \cup A_{p_2}(2^{n}\sigma, 2^{n+1} \sigma), \\
w(x) = \tilde{u}(x) & x \notin B_{p_1}(2^{n+1} \sigma) \cup B_{p_2}(2^{n+1}
\sigma).
\end{array} \right.  
\]
Again we have
\begin{eqnarray*}
\sum_{i=1}^2 \int_{A_{p_i}(2^{n} \sigma, 2^{n+1} \sigma)} |\nabla w|^2 \,dV_g & \leq &  C \sum_{i=1}^2 \int_{A_{p_i}(2^{n-1}  \sigma, 2^{n+1} \sigma)} |\nabla u|^2 \, dV_g \\ \\
& \leq & C \varepsilon \int_{\Sigma} |\nabla u|^2 \, dV_g, 
\end{eqnarray*}
where also here $C$ is a universal constant.

We apply Lemma \ref{mt-annulus} to $w$ for any point $p'$ such that $d(p', p_1) = \frac 1 2 R^{1/3}\sigma$, $s'= \sigma$ and $r = 2^{k+1} \delta$, to obtain:
\[
\frac{1}{16\pi-\varepsilon}\int_{ ( B_{p_1}(2^{n+1} \sigma) \cup B_{p_2}(2^{n+1} \sigma))^c} |\nabla u|^2 \, dV_g  + C \varepsilon \int_{\Sigma} |\nabla u|^2 \, dV_g \,\,\, \geq \] \[ \geq  \,\,\, \frac{1}{16\pi-\varepsilon}\int_{A_{p'}(s'/2,2 r)} |\nabla w|^2 \,dV_g  \] \[ \geq  \,\,\, \log \int_{A_{p'}(s', r)} e^{w} \,dV_g + \log \int_{A_{p'}(s',r)} e^{-w} \,dV_g + 4 \log \sigma - C.
\]
We now use \eqref{caso1} and the hypothesis of Case 1.4 to conclude that
\[
\frac{1}{16\pi-\varepsilon}\int_{ ( B_{p_1}(2^n \sigma) \cup B_{p_2}(2^n \sigma))^c} |\nabla u|^2 \, dV_g + C \varepsilon \int_{\Sigma} |\nabla u|^2 \, dV_g \,\,\, \geq
\]
\begin{equation}\label{fuori2} 
\geq \,\,\, \log \int_{\Sigma} e^{u} \,dV_g + \log \int_{\Sigma} e^{-u} \,dV_g + 4 \log \sigma - C.
\end{equation}
The inequality \eqref{fuori2} jointly with \eqref{dentro2} implies our result (after properly renaming $\varepsilon$).

\bigskip

\noindent {\bf CASE 2:} Assume that 
\[
\int_{B_{p_1}(\delta)^c} e^{u} \,dV_g \geq \tau/2 \int_{\Sigma} e^{u} \,dV_g \quad \mbox{or} \quad \int_{B_{p_2}(\delta)^c} e^{-u} \,dV_g \geq \tau/2 \int_{\Sigma} e^{-u} \,dV_g.
\]
Without loss of generality, suppose that the first alternative holds true. Let now \mbox{$\delta'= \frac{\delta}{2^{3/\varepsilon}}$}. If moreover:
\[
\int_{B_{p_2}(\delta')^c} e^{-u} \,dV_g \geq \tau/2 \int_{\Sigma} e^{-u} \,dV_g,
\]
then we can apply Proposition \ref{mt-imp} to deduce the thesis. Therefore we can assume that
\begin{equation}\label{caso2}
\int_{A_{p_2}(R\sigma,\delta')} e^{-u} \,dV_g \geq \tau/2 \int_{\Sigma} e^{-u} \,dV_g.
\end{equation}
We can apply the whole procedure of Case 1 to $u$, just by replacing $\delta$ with $\delta'$. In fact, as in Case 1.1, we would get the inequalities \eqref{dentro1} and \eqref{fuori1.1}. However, in this case we have to manage the fact that we do not know whether holds
\[
\int_{A_p(s',r)} e^{u} \,dV_g \geq \alpha \int_{\Sigma} e^{u} \,dV_g,
\]
for some fixed $\alpha>0$. This property is needed in \eqref{fuori1.1} to get the estimate 
\[
\log \int_{A_p(s', r)} e^{\tilde{u}} \,dV_g \geq \log \int_{\Sigma} e^{u} \,dV_g- C,
\]
which allows us to deduce \eqref{fuori1}. To do this, we first apply Jensen and Poincar\'e-Wirtinger inequalities, to get
\[
\log \int_{A_p(s', r)} e^{\tilde{u}} \,dV_g \geq \log \int_{A_p(r/8, r/4)} e^{u} \,dV_g \geq 
\]
\[
\log \fint_{A_{p_1}(r/8, r/4)} e^{u} \,dV_g - C \geq \fint_{A_{p_1}(r/8, r/4)} u \,dV_g - C \geq -\varepsilon \int_{\Sigma} |\nabla u|^2\, dV_g - C.
\]
Therefore, taking into account \eqref{caso2} and the last inequality, from \eqref{fuori1.1} we obtain (after properly renaming $\varepsilon$):
\begin{equation} \label{fuori3} 
\frac{1}{16\pi-\varepsilon}\int_{A_p(s'/2,2 r)}|\nabla\tilde{u}|^2 \,dV_g \geq \log \int_{\Sigma} e^{u} \,dV_g + 4 \log \sigma - C.
\end{equation}
Next, we apply Proposition \ref{mt-local}, to get
\[
\frac{1}{16\pi-\varepsilon} \int_{B_{p_1}(\delta/2)^c} |\nabla u|^2 \,dV_g \geq \log \int_{B_{p_1}(\delta)^c}  e^{u} \,dV_g + \log \int_{B_{p_1}(\delta)^c} e^{-u} \,dV_g .
\]
Reasoning as above and using the hypothesis of Case 2, we can deduce:
\begin{equation} \label{fuori4} 
\frac{1}{16\pi-\varepsilon}\int_{B_{p_1}(\delta)^c}|\nabla u|^2 \,dV_g \geq \log \int_{\Sigma} e^{u} \,dV_g + 4 \log \sigma - C.
\end{equation}
Finally we obtain our result by combining \eqref{fuori4}, \eqref{fuori3} and \eqref{dentro1}.

If we are under the conditions of Cases 1.2, 1.3 and 1.4, the thesis follows arguing in the same way.
\end{proof}

\begin{rem}\label{func-h}
Our goal is to use Proposition \ref{mt-improved} to obtain a lower bound of the functional $I_{\rho_1,\rho_2}$ under suitable conditions. The presence of the two functions $h_1$ and $h_2$ in $I_{\rho_1,\rho_2}$ is not so relevant because of the following estimates:
\begin{eqnarray*}
\log\int_\Sigma h_1(x)\,e^{u} \,dV_g &\leq& \log\int_\Sigma e^{u}\,dV_g +\log \|h_1\|_{\infty} \\\\
\log\int_\Sigma h_2(x)\,e^{-u} \,dV_g &\leq& \log\int_\Sigma e^{-u}\,dV_g +\log \|h_2\|_{\infty}
\end{eqnarray*}
\end{rem}

\section{Min-max scheme}

Let $\overline{\Sigma}_{\delta}$ be the topological cone over $\Sigma$ defined in \eqref{cone}, and let us set
\[ 
\overline{D}_{\delta} = diag\bigr(\overline{\Sigma}_{\delta} \times \overline{\Sigma}_{\delta}\bigr) = \bigr\{ (\vartheta_1, \vartheta_2)
    \in \overline{\Sigma}_{\delta} \times \overline{\Sigma}_{\delta} \; : \; \vartheta_1 = \vartheta_2 \bigr\}, 
\]
\[ 
X = \bigr(\overline{\Sigma}_{\delta} \times \overline{\Sigma}_{\delta}\bigr) \setminus \overline{D}_{\delta}. 
\]
Let $\varepsilon>0$ be sufficiently small and let $R, \delta, \psi$ be as in Proposition \ref{conc}. Consider then the map $\Psi$ defined by
\begin{equation}\label{eq:Psi}
    \Psi(u) = \left( \psi \left( \frac{e^{u}}{\int_{\Sigma} e^{u} \,dV_g}
    \right), \psi \left( \frac{e^{-u}}{\int_{\Sigma} e^{-u} \,dV_g} \right) \right).
\end{equation}
By Proposition \ref{mt-improved} and Remark \ref{func-h}, we have a lower bound of the functional $I_{\rho_1,\rho_2}$ on functions $u$ such that $u\in\overline{D}_{\delta}$. Therefore, there exists a large $L > 0$ such that if $I_{\rho_1, \rho_2}(u) \leq - L$ then it follows that $\Psi(u)\in X$.

\smallskip

In \cite{mal-ruiz} the authors proved that even though the set $X$ is non compact, it retracts to some compact subset $\mathcal{X}_\nu$. Indeed, we have the following lemma.

\begin{lem}\label{retr} 
For $\nu \ll \delta$, define
\[
  \mathcal{X}_{\nu,1} = \left\{ \bigr( (x_1, t_1), (x_2, t_2) \bigr) \in X :   \left| t_1 - t_2 \right|^2 + d(x_1, x_2)^2 \geq\delta^4,\right. \vspace{-0.25cm}\] 
\[ 
\hspace{0.2cm} \max\{t_1, t_2\} < \delta, \min\{t_1, t_2\} \in \left[ \nu^2, \nu \right] \Bigr\};
\]
$$
  \mathcal{X}_{\nu,2} = \Bigr\{ \bigr( (x_1, t_1), (x_2, t_2) \bigr) \in X
    \; : \; \max\{t_1, t_2\} = \delta, \min\{t_1, t_2\}
    \in \left[ \nu^2, \nu \right] \Bigr\},
$$
and set
\[
    \mathcal{X}_{\nu} = \bigr( \mathcal{X}_{\nu,1}
    \cup \mathcal{X}_{\nu,2} \bigr) \subseteq X.
\]
Then there is a retraction $R_{\nu}$ of $X$ onto $\mathcal{X}_{\nu}$.
\end{lem}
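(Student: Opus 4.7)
The construction of $R_\nu$ closely parallels the retraction built in \cite{mal-ruiz}, Lemma 2.5, and I would obtain it as a composition $R_\nu = F_2\circ F_1$ of two continuous self-maps of $X$, each enforcing one of the two conditions defining $\mathcal{X}_\nu$ and each acting as the identity wherever that condition already holds.

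The first stage is a scale-reduction map $F_1:X\to X$ whose image lies in $\bigr\{\min\{t_1,t_2\}\in[\nu^2,\nu]\bigr\}$. Given $\bigr((x_1,t_1),(x_2,t_2)\bigr)\in X$, I set $\mu=\min\{t_1,t_2\}$, apply the clamp $\phi(t)=\max\bigr\{\nu^2,\min\{t,\nu\}\bigr\}$ to $\mu$, and rescale by $\lambda=\phi(\mu)/\mu$. On the open subset where neither $t_i$ equals $\delta$, I define
\[
F_1\bigr((x_1,t_1),(x_2,t_2)\bigr) = \bigr((x_1,\lambda t_1),(x_2,\lambda t_2)\bigr),
\]
with the convention that any rescaled $t$-coordinate $\geq\delta$ is identified with the apex of $\overline{\Sigma}_\delta$. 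Since $\lambda>0$ is a continuous function of $\mu$ that equals $1$ whenever $\mu\in[\nu^2,\nu]$, distinctness is preserved and $\mathcal{X}_\nu$ is fixed pointwise. To extend $F_1$ continuously across configurations in which one $t_i$ equals $\delta$, I damp $\lambda$ to $1$ by a continuous cutoff supported near $\max\{t_1,t_2\}=\delta$, so that apex coordinates remain apex coordinates.

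The second stage is a separation map $F_2:F_1(X)\to\mathcal{X}_\nu$. Fix a continuous cutoff $\chi:[0,+\infty)\to[0,1]$ with $\chi\equiv1$ on $[0,\delta^4/2]$ and $\chi\equiv0$ on $[\delta^4,+\infty)$, and set $s=\chi\bigr(|t_1-t_2|^2+d(x_1,x_2)^2\bigr)$. On $\{s=0\}$ the pair already belongs to $\mathcal{X}_{\nu,1}\cup\mathcal{X}_{\nu,2}$ and $F_2$ is the identity. On $\{s>0\}$, $F_2$ pushes the coordinate realizing $\max\{t_1,t_2\}$ toward the apex, replacing $t_{\max}$ by $(1-s)t_{\max}+s\delta$ and leaving the other coordinate unchanged. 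The minimum $t$-value is preserved throughout, so it stays in $[\nu^2,\nu]$; when $s=1$ the larger coordinate becomes the apex, placing the image in $\mathcal{X}_{\nu,2}$; and when $s\in(0,1)$ one either lands in $\mathcal{X}_{\nu,1}$ or continues to interpolate toward $\mathcal{X}_{\nu,2}$. Composing, $R_\nu=F_2\circ F_1$ is continuous and restricts to the identity on $\mathcal{X}_\nu$.

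The main technical obstacle lies in ensuring continuity of $F_2$ across the locus $\{t_1=t_2\}\cap X$, where the ``coordinate of larger $t$-value'' is not canonically defined. On this locus one has $x_1\neq x_2$ (since the pair belongs to $X$), so the two points remain distinct under symmetric perturbations; the tie is resolved by redistributing the push between the two coordinates via a sigmoid weight in $t_1-t_2$ that is symmetric at the diagonal and concentrates on the truly larger coordinate away from it, supplemented, if necessary, by a small continuous enlargement of $d(x_1,x_2)$ along geodesics in $\Sigma$ until the $\mathcal{X}_{\nu,1}$ condition holds. A second, routine point is that both $F_1$ and $F_2$ must descend to the cone quotient at the apex of $\overline{\Sigma}_\delta$, which is ensured by the damping in Stage 1 and by the fact that Stage 2 sends $t=\delta$ to $t=\delta$ by construction. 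All these verifications follow the same scheme as in Lemma 2.5 of \cite{mal-ruiz}.
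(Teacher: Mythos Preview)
The paper does not prove this lemma at all: it simply records the statement and attributes it to \cite{mal-ruiz}. Your proposal therefore goes well beyond what the paper does, which is fine, but it means there is no ``paper's proof'' to compare against except the original construction in \cite{mal-ruiz}.

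As a plan, your two-stage outline has the right shape, but Stage~2 as written does not yet yield a map into $\mathcal{X}_\nu$. You define $F_2$ by replacing $t_{\max}$ with $(1-s)t_{\max}+s\delta$; however, for $s\in(0,1)$ the resulting point need not satisfy either the separation bound of $\mathcal{X}_{\nu,1}$ or the apex condition of $\mathcal{X}_{\nu,2}$. Your own sentence ``one either lands in $\mathcal{X}_{\nu,1}$ or continues to interpolate toward $\mathcal{X}_{\nu,2}$'' acknowledges this, but ``continues to interpolate'' is not part of the map you wrote down, so $F_2$ is not actually a retraction onto $\mathcal{X}_\nu$ as stated. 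You need either to iterate/flow until one of the two conditions is met, or to choose the amount of push so that the endpoint lies in $\mathcal{X}_\nu$ by construction.

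The treatment of the tie locus $\{t_1=t_2\}$ is also too loose to stand as written. A ``sigmoid weight in $t_1-t_2$'' that is symmetric at the diagonal will push \emph{both} $t$-coordinates toward $\delta$ by the same amount there, which does nothing to create separation and risks sending both to the apex simultaneously (leaving $X$). The fallback of ``a small continuous enlargement of $d(x_1,x_2)$ along geodesics'' is not globally well defined on a compact surface (no canonical outward direction, cut-locus issues). In \cite{mal-ruiz} this is handled differently, and you should either follow that construction or give a concrete, globally continuous recipe here. Finally, the reference to ``Lemma~2.5'' of \cite{mal-ruiz} is the covering lemma in the numbering used by this paper; the retraction is a different lemma there, so you should double-check the citation.
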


Our next goal is to introduce a family of test functions labelled on the set $\mathcal{X}_\nu$ on which the functional $I_{\rho_1, \rho_2}$ attains large negative values. For $(\vartheta_1, \vartheta_2) = \bigr( (x_1, t_1), (x_2, t_2) \bigr) \in \mathcal{X}_\nu$ define
\begin{equation}\label{test}
    \varphi(y) = \varphi_{(\vartheta_1, \vartheta_2)}(y) = \log  \frac{\left( 1 + \tilde{t}_2^2 d(x_2,y)^2\right)^2}{\left( 1 + \tilde{t}_1^2 d(x_1,y)^2 \right)^2},
\end{equation}
where
\[
    \tilde{t}_i = \tilde{t}_i(t_i) = \left\{
                    \begin{array}{ll}
                      \frac{1}{\,t_i} & \hbox{ for } t_i \leq \frac{\delta}{2},\vspace{0.1cm} \\ 
                      - \frac{4}{\delta^2} (t_i-\delta) & \hbox{ for } t_i \geq \frac{\delta}{2},
                    \end{array}
                  \right. 
\]
for $i=1,2$.

\smallskip

We start by proving the following estimate.

\begin{lem}\label{integral} 
For $\nu$ sufficiently small, and for $(\vartheta_1,\vartheta_2) \in \mathcal{X}_{\nu}$, there exists a constant $C=C(\delta,\Sigma) > 0$, depending only on $\Sigma$ and $\delta$, such that
\begin{equation}\label{estimate}
    \frac{1}{C} \frac{t_1^2}{t_2^4} \leq \int_\Sigma e^{\varphi}
    \,dV_g \leq C \frac{t_1^2}{t_2^4}.
\end{equation}
\end{lem}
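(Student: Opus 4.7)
The plan is to compute the integral directly by exploiting the explicit form of the test function. Rewrite
\[
\int_\Sigma e^\varphi \,dV_g = \int_\Sigma \frac{(1+\tilde{t}_2^2 d(x_2,y)^2)^2}{(1+\tilde{t}_1^2 d(x_1,y)^2)^2}\,dV_g,
\]
and observe that $\tilde{t}_i$ depends continuously on $t_i$ with $\tilde{t}_i\geq 0$. Crucially, for $(\vartheta_1,\vartheta_2)\in\mathcal{X}_\nu$, either $\max\{t_1,t_2\}=\delta$ (so one $\tilde{t}_i=0$, killing the corresponding factor) or $|t_1-t_2|^2+d(x_1,x_2)^2\geq\delta^4$, which gives a quantitative lower bound on the separation of the data. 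Working in normal geodesic coordinates around $x_1$ and $x_2$ is legitimate since $\delta$ is fixed and the metric is uniformly comparable to the Euclidean one on geodesic balls of bounded radius, so the integrand reduces to a ratio of rational functions amenable to direct computation in polar coordinates.

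The core of the argument is a region decomposition of $\Sigma$. Choose a radius $r$ comparable to $\max\{d(x_1,x_2),\,1/\tilde{t}_1\}$ (capped by $diam(\Sigma)$) and split $\Sigma = B_{x_1}(r)\cup B_{x_1}(r)^c$. On $B_{x_1}(r)^c$ the denominator is bounded below by $(1+\tilde{t}_1^2 r^2)^2$, while the numerator is bounded above by $(1+\tilde{t}_2^2\, diam(\Sigma)^2)^2$, yielding a contribution of the expected order. On $B_{x_1}(r)$ the triangle inequality $|d(x_2,y)-d(x_1,x_2)|\leq d(x_1,y)$ essentially replaces the numerator by the constant $(1+\tilde{t}_2^2 d(x_1,x_2)^2)^2$, after which one is left with the explicit model integral
\[
\int_0^r \frac{s\,ds}{(1+\tilde{t}_1^2 s^2)^2}=\frac{1}{2\tilde{t}_1^2}\left(1-\frac{1}{1+\tilde{t}_1^2 r^2}\right),
\]
which for $\tilde{t}_1 r\gg 1$ is of order $t_1^2$. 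Combining these contributions, the dominant term is of order $t_1^2\,\tilde{t}_2^4\,d(x_1,x_2)^4$ when $\tilde{t}_2 d(x_1,x_2)\gg 1$ and of order $t_1^2$ otherwise, matching $t_1^2/t_2^4$ up to multiplicative constants depending only on $\Sigma$ and $\delta$. The matching lower bound is obtained by restricting integration to an annulus around $x_1$ of comparable inner and outer radii, where the same pointwise estimates can be reversed.

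The main obstacle is the uniform case analysis dictated by the structure of $\mathcal{X}_\nu$. One must handle simultaneously: (a) both $t_i$ small with $d(x_1,x_2)$ bounded below by $\sim\delta^2$; (b) both $t_i$ small and $x_1,x_2$ close but $|t_1-t_2|$ bounded below; (c) one of $t_i$ equal to $\delta$, causing $\tilde{t}_i=0$ and trivializing one factor; as well as the kink in the piecewise definition of $\tilde{t}_i$ at $t_i=\delta/2$. The delicate point is arranging the estimates so that the final constant $C(\delta,\Sigma)$ remains finite and independent of $\nu$, which requires careful bookkeeping of the interplay between $d(x_1,x_2)$, the sizes of $t_1$ and $t_2$, and the choice of the cutoff radius $r$ in each regime.
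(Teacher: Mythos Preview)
Your outline is essentially correct and would lead to a valid proof, but it is organized differently from the paper and is noticeably heavier. The paper does not introduce an adaptive radius $r\sim\max\{d(x_1,x_2),1/\tilde t_1\}$; instead it uses a single clean dichotomy on $|t_1-t_2|$. If $|t_1-t_2|\geq\delta^3$, then by the definition of $\mathcal{X}_\nu$ one $t_i$ lies in $[\nu^2,\nu]$ while the other is at least $\delta^3/2$; the ``large'' index makes $1+\tilde t_i^2 d(x_i,y)^2$ uniformly bounded above and below in terms of $\delta,\Sigma$, so the integral reduces to the standard one-bubble computation via $\int_{\mathbb{R}^2}(1+\lambda^2|x|^2)^{-2}dx=C_0/\lambda^2$. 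If $|t_1-t_2|<\delta^3$, then automatically $d(x_1,x_2)\geq\delta^2/2$ and both $t_i\leq\nu+\delta^3$; one then integrates over $B_{x_1}(\delta^3)$ and its complement, where on the ball the numerator is $\sim(1+\tilde t_2^2 d(x_1,x_2)^2)^2\sim 1/t_2^4$ and the denominator integrates to $\sim t_1^2$.

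Your approach buys a more ``unified'' decomposition at the cost of tracking several sub-regimes simultaneously, which is exactly the bookkeeping you flag as the main obstacle. Two remarks: first, your case~(b) (``both $t_i$ small, $x_1,x_2$ close, $|t_1-t_2|$ bounded below'') does not really occur as described, since $\min t_i\leq\nu$ and $|t_1-t_2|\gtrsim\delta^2$ force $\max t_i\gtrsim\delta^2$; this is precisely the paper's first case, where one factor becomes harmless. Second, the paper's dichotomy absorbs the piecewise definition of $\tilde t_i$ (and the kink at $\delta/2$) into the ``large $t_i$'' branch without extra work, whereas your adaptive-$r$ scheme would need to address it separately. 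In short, both routes succeed, but the paper's is shorter because it lets the structure of $\mathcal{X}_\nu$ do the case-splitting for you.
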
 

\begin{proof}
First, observe that the following equality holds true for some fixed positive constant $C_0$:
\begin{equation}\label{tot}
    \int_{\mathbb{R}^2} \frac{1}{\left( 1 + \lambda^2 |x|^2 \right)^2} \,dx = \frac{C_0}{\lambda^2};
     \qquad  \lambda > 0.
\end{equation}
To prove the lemma, we distinguish the two cases
\[
    |t_1 - t_2| \geq \delta^3 \qquad  \hbox{ and } \qquad |t_1 - t_2| < \delta^3,
\]
in order to exploit the properties of $\mathcal{X}_{\nu}$. Starting with the first alternative, by the definition of $\mathcal{X}_{\nu}$ and by the
fact that $\nu \ll \delta$, it turns out that one of the $t_i$'s belongs to $[\nu^2, \nu]$, while the other is greater or equal to $\frac{\delta^3}{2}$.

\smallskip

If $t_1 \in [\nu^2, \nu]$ and if $t_2 \geq \frac{\delta^3}{2}$ then the function $1 + \tilde{t}_2^2 d(x_2,y)^2$ is bounded above and below by two positive constants depending only on $\Sigma$ and $\delta$. Therefore, using \eqref{tot} we get
$$
    \frac{t_1^2}{C} = \frac{1}{C \tilde{t}_1^2} \leq \int_\Sigma
    e^{\varphi(y)} \,dV_g(y) \leq \frac{C}{\tilde{t}_1^2} = C t_1^2.
$$
On the other hand, if $t_2 \in [\nu^2, \nu]$ and if $t_1 \geq \frac{\delta^3}{2}$ then the function $1 + \tilde{t}_1^2 d(x_1,y)^2$ is bounded above and below by two positive constants depending only on $\Sigma$ and $\delta$, hence
$$
  \int_\Sigma e^{\varphi(y)} \,dV_g(y) \geq \frac{1}{C}
   \int_\Sigma \left(1 + \tilde{t}_2^2 d(x_2,y)^2\right)^2 \,dV_g(y) \geq
 \frac{\tilde{t}_2^4}{C} = \frac{1}{C t_2^4},
$$
and similarly
$$
  \int_\Sigma e^{\varphi(y)} \,dV_g(y) \leq C
   \int_\Sigma \left(1 + \tilde{t}_2^2 d(x_2,y)^2\right)^2 \,dV_g(y) \leq
 C \tilde{t}_2^4 = \frac{C}{t_2^4}.
$$
In both the last two cases we then obtain the conclusion.

\smallskip

Suppose now that we are in the second alternative, i.e. $|t_1 - t_2| < \delta^3$. Then by the definition of $\mathcal{X}_{\nu}$ we have that $d(x_1, x_2) \geq \frac{\delta^2}{2}$ and that $t_1, t_2 \leq \nu + \delta^3$. Using \eqref{tot} we obtain
$$
  \int_\Sigma e^{\varphi(y)} \,dV_g(y) \geq \int_{B_{x_1}(\delta^3)} e^{\varphi(y)} \,dV_g(y)
  \geq \frac{1}{C} \frac{\left(1 + \tilde{t}_2^2
 d(x_1,x_2)^2\right)^2}{\tilde{t}_1^2} \geq \frac{1}{C} \frac{t_1^2}{t_2^4}.
$$
In an analogous way we derive
$$
  \int_{B_{x_1}(\delta^3)} e^{\varphi(y)} \,dV_g(y) \leq C \frac{\left(1 +
  \tilde{t}_2^2 d(x_1,x_2)^2\right)^2}{\tilde{t}_1^2} \leq C \frac{t_1^2}{t_2^4}.
$$
Finally, by the estimate
$$
  \int_{(B_{x_1}(\delta^3))^c} e^{\varphi(y)} \,dV_g(y) \leq \frac{C}{\tilde{t}_1^4}
   \int_{(B_{x_1}(\delta^3))^c} \left(1 + \tilde{t}_2^2 d(x_2,y)^2\right)^2 \,dV_g(y)
   \leq C \frac{t_1^4}{t_2^4},
$$
we are done.
\end{proof}

\begin{rem}
Notice that for $e^{-\varphi}$ the same result holds true just by exchanging the indices of $t_1$ and $t_2$.
\end{rem}

\begin{prop}\label{low} 
For $(\vartheta_1, \vartheta_2) \in \mathcal{X}_{\nu}$, let $\varphi_{(\vartheta_1, \vartheta_2)}$ be defined as in \eqref{test}. Then
$$
  I_{\rho_1,\rho_2}(\varphi_{(\vartheta_1, \vartheta_2)}) \rightarrow - \infty \quad
  \hbox{ as } \nu \rightarrow 0,
$$
uniformly for $(\vartheta_1, \vartheta_2) \in \mathcal{X}_{\nu}$.
\end{prop}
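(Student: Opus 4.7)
The plan is to derive explicit asymptotic bounds on each of the four summands of $I_{\rho_1,\rho_2}(\varphi_{(\vartheta_1,\vartheta_2)})$ in terms of $t_1$ and $t_2$, and to observe that in the supercritical regime $\rho_1,\rho_2>8\pi$ the resulting combination is driven to $-\infty$ as $\min\{t_1,t_2\}\to 0$. Write $\varphi=2u_2-2u_1$ with $u_i(y)=\log\bigl(1+\tilde{t}_i^2 d(x_i,y)^2\bigr)$, so that the Dirichlet energy, the exponential integrals, and the average can be treated separately.

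For $\tfrac{1}{2}\int_\Sigma|\nabla_g\varphi|^2\,dV_g$ I would use the standard Liouville bubble computation: passing to geodesic normal coordinates at $x_i$ and rescaling by $\tilde{t}_i$ gives $\int_\Sigma|\nabla_g u_i|^2\,dV_g\leq 8\pi\log\tilde{t}_i+C$ when $\tilde{t}_i$ is large, and a bounded contribution otherwise. The cross term $\int\nabla u_1\cdot\nabla u_2\,dV_g$ is the delicate point, and here I would exploit the structure of $\mathcal{X}_\nu$: in $\mathcal{X}_{\nu,1}$ the condition $|t_1-t_2|^2+d(x_1,x_2)^2\geq\delta^4$ forces either $d(x_1,x_2)\geq\delta^2/2$ (well-separated centers) or $|t_1-t_2|\geq\delta^2$ (well-separated scales), while in $\mathcal{X}_{\nu,2}$ one has $\tilde{t}_{\max}\leq 2/\delta$, so the corresponding bubble is essentially flat. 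A subcase analysis based on Cauchy--Schwarz and on explicit splitting on disjoint annuli centered at $x_1$ and $x_2$ shows that the cross term is $O(1)$ in all situations, yielding
\[ \tfrac{1}{2}\int_\Sigma|\nabla_g\varphi|^2\,dV_g \leq -16\pi\log t_1-16\pi\log t_2+C \]
uniformly on $\mathcal{X}_\nu$ (with the understanding that $-\log t_i$ degenerates to a bounded quantity when $t_i\in[\delta/2,\delta]$).

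For the exponential integrals, Lemma \ref{integral} and its analogue for $e^{-\varphi}$ give
\[ \log\int_\Sigma e^\varphi\,dV_g = 2\log t_1-4\log t_2+O(1), \qquad \log\int_\Sigma e^{-\varphi}\,dV_g = 2\log t_2-4\log t_1+O(1), \]
and by Remark \ref{func-h} the same bounds hold with $h_1,h_2$ inserted up to an additive constant. For the average, splitting the integral into $B_{x_i}(1/\tilde{t}_i)$ and its complement and using $|\Sigma|=1$ yields $\int_\Sigma u_i\,dV_g=2\log\tilde{t}_i+O(1)=-2\log t_i+O(1)$, so $\int_\Sigma\varphi\,dV_g=4\log t_1-4\log t_2+O(1)$ (with the same convention when $\tilde{t}_i$ stays bounded).

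Plugging these bounds into $I_{\rho_1,\rho_2}$ and collecting the coefficients of $\log t_1$ and $\log t_2$ produces, after cancellation,
\[ I_{\rho_1,\rho_2}(\varphi_{(\vartheta_1,\vartheta_2)}) \leq 2(\rho_1-8\pi)\log t_1+2(\rho_2-8\pi)\log t_2+C. \]
Since both coefficients are strictly positive by hypothesis and $\min\{t_1,t_2\}\leq\nu\to 0$ while $\max\{t_1,t_2\}\leq\delta$, the right-hand side tends uniformly to $-\infty$ on $\mathcal{X}_\nu$. The main obstacle I anticipate is the uniform control of the cross term $\int\nabla u_1\cdot\nabla u_2\,dV_g$: one has to verify that all implicit constants are uniform across the different geometric regimes carved out by $\mathcal{X}_\nu$, paying particular attention to the transition zone $t_i\approx\delta/2$ where the piecewise definition of $\tilde{t}_i$ changes, and to the configurations in which $x_1$ and $x_2$ are close but the two bubbles live on very different scales.
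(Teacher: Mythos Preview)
Your proposal is correct and follows the same overall scheme as the paper: estimate the average, the Dirichlet energy, and the two exponential integrals separately, then combine. The only organisational difference is in the gradient term. The paper does \emph{not} decompose $\varphi=2u_2-2u_1$; it computes $\nabla_g\varphi$ directly from \eqref{test} and splits according to whether $t_2\geq\delta^3$ (one bubble essentially flat) or $t_2\leq\delta^3$ (forcing $d(x_1,x_2)\geq\delta^2/2$, hence disjoint concentration balls), obtaining \eqref{est2} with a multiplicative $o_\delta(1)$ on $\log(1/t_i)$. Your route via the cross term $\int\nabla u_1\!\cdot\!\nabla u_2$ is a legitimate alternative and in fact yields the slightly cleaner additive error $O_\delta(1)$; your worry about it is unfounded, because the structure of $\mathcal{X}_\nu$ guarantees that in every regime one of the two gradients is uniformly bounded (by $C_\delta$) on the region where the other one is large, and $\|\nabla u_i\|_{L^1(\Sigma)}=O(1)$ for the standard bubble, so the cross term is $O_\delta(1)$ in all cases.
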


\begin{proof}
We start by showing the following estimates:
\begin{equation}\label{est1}
    \int_{\Sigma} \varphi \,dV_g = 4 \bigr(1 + o_{{\delta}}(1)\bigr)
   \log t_1 - 4 \bigr(1 + o_{{\delta}}(1)\bigr) \log t_2;
\end{equation}
\begin{equation}\label{est2}
    \frac{1}{2}\int_{\Sigma} |\nabla_g \varphi|^2 \,dV_g \leq
    16 \pi \bigr(1 + o_{\delta}(1)\bigr) \log \frac{1}{t_1} +
    16 \pi \bigr(1 + o_{\delta}(1)\bigr) \log \frac{1}{t_2}.
\end{equation}
We begin by proving \eqref{est1}. It is convenient to divide $\Sigma$ into the two subsets
$$
   A_1 = B_{x_1}(\delta) \cup B_{x_2}(\delta); \qquad
   A_2 = \Sigma \setminus \mathcal{A}_1.
$$
Moreover, we write 
$$
  \varphi(y) = 2 \log  \left(1 + \tilde{t}_2^2 d(x_2,y)^2
  \right) - 2 \log \left( 1 + \tilde{t}_1^2 d(x_1,y)^2 \right).
$$
For $y \in A_2$ we clearly have that
$$
  \frac{1}{C_{\delta,\Sigma} t_1^2} \leq 1 + \tilde{t}_1^2 d(x_1,y)^2 \leq
  \frac{C_{\delta,\Sigma}}{t_1^2}; \qquad 
   \frac{1}{C_{\delta,\Sigma} t_2^2} \leq 1 + \tilde{t}_2^2 d(x_2,y)^2 \leq
  \frac{C_{\delta,\Sigma}}{t_2^2},
$$
therefore we derive
\[
   \int_{A_2} \varphi \,dV_g =
   4 \bigr(1 + o_{{\delta}}(1)\bigr) \log t_1 - 4 \bigr(1 + o_{{\delta}}(1)\bigr) \log t_2.
\]
Moreover, working in normal geodesic coordinates at $x_i$ one also finds
$$
   \int_{B_{\delta}(x_i)} \log \left( 1 + \tilde{t}_i^2 d(x_i,y)^2 \right)
   \,dV_g = o_\delta(1) \log t_i.
$$
Using jointly the last two inequalities we obtain \eqref{est1}.

\smallskip

We prove now \eqref{est2}. We have that
\begin{eqnarray*}
  \nabla_g \varphi(y) & = &
   2 \nabla_g \log \left( 1 + \tilde{t}_2^2
   d(x_2,y)^2 \right) - 2 \nabla_g \log \left( 1 + \tilde{t}_1^2 d(x_1,y)^2 \right)  \\ 
  & = & \frac{ 4\, \tilde{t}_2^2 d(x_2,y) \nabla_g d(x_2,y)}{1 + \tilde{t}_2^2 d(x_2,y)^2}
  - \frac{ 4\, \tilde{t}_1^2 d(x_1,y) \nabla_g d(x_1,y)}{1 + \tilde{t}_1^2 d(x_1,y)^2}.
\end{eqnarray*}
From now on we will assume, without loss of generality, that $t_1 \leq t_2$.
We distinguish between the case $t_2 \geq \delta^3$ and $t_2 \leq \delta^3$.

In the first case the function $1 + \tilde{t}_2^2 d(x_2,y)^2$ is uniformly
Lipschitz with bounds depending only on $\delta$, and therefore we have
$$
   \nabla_g \varphi(y) = - \frac{4 \tilde{t}_1^2 d(x_1,y) \nabla_g d(x_1,y)}{1 + \tilde{t}_1^2 d(x_1,y)^2}  + O_\delta(1).
$$
Let us fix a large constant $C_1 > 0$ and consider the subdivision of the surface $\Sigma$ into the three domains
\[
   B_1 = B_{x_1}(C_1 t_1); \qquad  B_2 = B_{x_2}(C_1 t_2);
  \qquad B_3 = \Sigma \setminus (B_1 \cup B_2).
\]
In $B_1$ we have that $|\nabla_g \varphi| \leq {C}{\tilde{t}_1}$, while
\begin{equation}\label{eq1}
    \frac{\tilde{t}_1^2 d(x_1,y) \nabla_g d(x_1,y)}{1 + \tilde{t}_1^2 d(x_1,y)^2} = \bigr(1 + o_{C_1}(1)\bigr) \frac{ \nabla_g d(x_1,y)}{d(x_1,y)} \qquad \quad \hbox{ in }
  \Sigma \setminus B_1.
\end{equation}
These estimates imply that
\begin{eqnarray*}
  \frac{1}{2}\int_{\Sigma} |\nabla_g \varphi|^2 \,dV_g & = &
  \int_{\Sigma \setminus B_1} |\nabla_g \varphi|^2 \,dV_g + o_{\delta}(1) \log \frac{1}{t_1} + O_\delta(1) \\ 
   & = & 16 \pi \int_{C_1 t_1}^1 \frac{dt}{t} + o_{\delta}(1) \log \frac{1}{t_1} + O_\delta(1) \\
    & = & 16 \pi \bigr(1 + o_{\delta}(1)\bigr) \log \frac{1}{t_1} + 16 \pi \bigr(1 + o_{\delta}(1)\bigr) \log \frac{1}{t_2} + O_\delta(1),
\end{eqnarray*}
recalling that $t_2 \geq \delta^3$.

If instead $t_2 \leq \delta_3$, by the definition of $\mathcal{X}_{\nu}$
we have that $d(x_1, x_2) \geq \frac{\delta^2}{2}$, and therefore $B_1 \cap B_2 = \emptyset$. Similarly to \eqref{eq1} we get
$$
  \left\{
    \begin{array}{ll}
     \displaystyle\frac{\tilde{t}_1^2 d(x_1,y) \nabla_g d(x_1,y)}{1 + \tilde{t}_1^2
     d(x_1,y)^2} = \bigr(1 + o_{C_1}(1)\bigr) \frac{ \nabla_g d(x_1,y)}{d(x_1,y)}  &  \vspace{0.2cm}\\
     \displaystyle\frac{\tilde{t}_2^2 d(x_2,y) \nabla_g d(x_2,y)}{1 + \tilde{t}_2^2
     d(x_2,y)^2} = \bigr(1 + o_{C_1}(1)\bigr) \frac{ \nabla_g d(x_2,y)}{d(x_2,y)}
      &
    \end{array}
  \right. \qquad  \hbox{ in } B_3.
$$
Moreover we have
$$
    \left | \nabla_g \varphi \right |\leq {C}{\tilde{t}_i} \quad \hbox{ in }
    B_i, \ i=1, 2.
$$
Therefore we find
\begin{eqnarray*}
  \frac{1}{2}\int_{\Sigma} |\nabla_g \varphi|^2 \,dV_g & = &
  \int_{B_3} |\nabla_g \varphi|^2 \,dV_g  + o_{\delta}(1) \log \frac{1}{t_1} + o_{\delta}(1) \log \frac{1}{t_2} + O_\delta(1) \\
  & = & 16 \pi \bigr(1 + o_{\delta}(1)\bigr) \log \frac{1}{t_1} + 16 \pi \bigr(1 + o_{\delta}(1)\bigr) \log \frac{1}{t_2} + O_\delta(1),
\end{eqnarray*}
for $t_2 \leq \delta^3.$ This concludes the proof of \eqref{est2}.

Finally, the estimates \eqref{est1} and \eqref{est1}, jointly with \eqref{estimate} and Remark \ref{func-h} yield the inequality
\[
    I_{\rho_1,\rho_2}(\varphi) \leq \bigr(2 \rho_1 - 16 \pi + o_\delta(1)\bigr)  \log t_1 + \bigr(2 \rho_2 - 16 \pi + o_\delta(1)\bigr) \log t_2 \rightarrow - \infty 
\]
as $\nu \rightarrow 0$, uniformly for $(\vartheta_1, \vartheta_2) \in \mathcal{X}_\nu$, since $\rho_1, \rho_2 > 8 \pi$.
\end{proof}

We next state a technical lemma, that will be of use later on.

\begin{lem}\label{scale1} 
Let $\varphi_{(\vartheta_1, \vartheta_2)}$ be as in \eqref{test}: then, for some $C=C(\delta,\Sigma)>0$, the following estimates hold uniformly in $(\vartheta_1, \vartheta_2) \in \mathcal{X}_\nu$:
\begin{equation}\label{eq-scale}
    \sup_{x \in \Sigma} \int_{B_x(r t_1)} e^{\varphi} \,dV_g
   \leq C r^2 \frac{t_1^2}{t_2^4} \qquad  \forall r >0.
\end{equation}
Moreover, given any $\varepsilon > 0$ there exists $C=C(\varepsilon, \delta, \Sigma)$, depending only on $\varepsilon$, $\delta$ and $\Sigma$ (but not on $\nu$), such that
\begin{equation}\label{eq-scale2}
  \int_{B_{x_1}(C t_1)} e^{\varphi} \,dV_g \geq (1 - \varepsilon)
  \int_{\Sigma} e^{\varphi} \,dV_g,
\end{equation}
uniformly in $(\vartheta_1, \vartheta_2) \in \mathcal{X}_\nu$.
\end{lem}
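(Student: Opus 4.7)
The plan rests on a uniform pointwise upper bound for $e^{\varphi}$. From the piecewise definition of $\tilde t_i$ one checks that $\tilde t_i(t_i)\le 1/t_i$ for every admissible $t_i\in(0,\delta]$ (with equality at $t_i=\delta/2$). Combined with $d(x_2,y)\le\mathrm{diam}(\Sigma)$, this yields
\[
\bigl(1+\tilde t_2^{\,2}d(x_2,y)^{2}\bigr)^{2}\le\bigl(1+\mathrm{diam}(\Sigma)^{2}/t_2^{2}\bigr)^{2}\le \frac{C_\Sigma}{t_2^{4}},
\]
while the denominator $(1+\tilde t_1^{\,2}d(x_1,y)^{2})^{2}\ge 1$, so $e^{\varphi}\le C_\Sigma/t_2^{4}$ on all of $\Sigma$.

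For \eqref{eq-scale} this is essentially everything: the normalization $|\Sigma|=1$ forces $|B_x(\rho)|\le C\min(\rho^{2},1)\le C_\Sigma\rho^{2}$ for every $\rho>0$ (small balls behave Euclideanly, large balls are bounded by $|\Sigma|$), so multiplying the pointwise bound by $|B_x(rt_1)|\le Cr^{2}t_1^{2}$ gives \eqref{eq-scale} directly, with a constant depending only on $\Sigma$ (and implicitly on $\delta$ through the admissibility of $t_2$).

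For \eqref{eq-scale2} I would combine the same pointwise philosophy with a lower bound from Lemma \ref{integral}, namely $\int_\Sigma e^{\varphi}\,dV_g\ge c_0(\delta,\Sigma)\,t_1^{2}/t_2^{4}$, and argue by a case split on the size of $t_1$. If $Ct_1\ge\mathrm{diam}(\Sigma)$, then $B_{x_1}(Ct_1)=\Sigma$ and the statement is trivial. Otherwise $t_1<\mathrm{diam}(\Sigma)/C$, which for $C$ large (depending only on $\delta,\Sigma$) forces $t_1<\delta/2$, so that $\tilde t_1=1/t_1$. On $B_{x_1}(Ct_1)^{c}$ we then exploit the improved decay
\[
\bigl(1+\tilde t_1^{\,2}d(x_1,y)^{2}\bigr)^{2}\ge \tilde t_1^{\,4}d(x_1,y)^{4}=d(x_1,y)^{4}/t_1^{4},
\]
so that $e^{\varphi}\le Ct_1^{4}/(t_2^{4}d(x_1,y)^{4})$ there. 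I would split the complement as $A_{x_1}(Ct_1,r_0)\cup B_{x_1}(r_0)^{c}$ with $r_0$ the injectivity radius: on the annulus, passing to geodesic polar coordinates around $x_1$ and substituting $u=r/t_1$ reduces the integral to $\int_{C}^{\infty}u(1+u^{2})^{-2}du=\frac{1}{2(1+C^{2})}$, producing a contribution of order $t_1^{2}/(C^{2}t_2^{4})$; on $B_{x_1}(r_0)^{c}$ the pointwise bound gives a contribution of order $t_1^{4}/t_2^{4}$.

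Dividing by $c_0 t_1^{2}/t_2^{4}$, the ratio of the complement integral to the total is controlled by $C_1/C^{2}+C_2 t_1^{2}$, and the first term is made $\le\varepsilon/2$ by choosing $C$ large. The main obstacle, I expect, is the second term, which is the place where $t_1$ itself intervenes: if $t_1^{2}\le\varepsilon/(2C_2)$ the bound closes, and otherwise $t_1$ is bounded below by a constant depending only on $\varepsilon,\delta,\Sigma$, so enlarging $C$ further brings us back to the trivial case $Ct_1\ge\mathrm{diam}(\Sigma)$. The delicate point is therefore to arrange a single $C=C(\varepsilon,\delta,\Sigma)$, of order $\sqrt{\mathrm{const}/\varepsilon}$, that simultaneously closes both regimes with no gap between "$t_1$ small, direct estimate" and "$t_1$ not small, trivial case".
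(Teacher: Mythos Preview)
Your proof is correct and essentially identical to the paper's: both use the pointwise bound $e^{\varphi}\le C/t_2^{4}$ together with a volume estimate for \eqref{eq-scale}, and for \eqref{eq-scale2} control the tail $\int_{\Sigma\setminus B_{x_1}(Rt_1)}(1+\tilde t_1^{2}d(x_1,y)^{2})^{-2}\,dV_g$ in polar coordinates against the lower bound from Lemma~\ref{integral} (the paper packages this tail as $o_R(1)$ in the limit $t_1\to 0^{+}$, leaving the bounded-$t_1$ case implicit). Your concern about arranging a single $C$ is unfounded: taking $C\ge\max\bigl(\sqrt{2C_1/\varepsilon},\,\mathrm{diam}(\Sigma)\sqrt{2C_2/\varepsilon}\bigr)$ closes both regimes at once, since in the second case $Ct_1\ge\mathrm{diam}(\Sigma)$ and the inequality is trivial.
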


\begin{proof}
By the elementary inequalities $\left(1 + \tilde{t}_2^2 d(x_2,y)^2\right)^2 \leq \frac{C}{t_2^4}$ and $1 +\tilde{t}_1^2 d(x_1,y)^2 \geq 1$ we have
$$
  \int_{B_x(t_1 r)} e^{\varphi(y)} \,dV_g(y) \leq \frac{C}{t_2^4} \int_{B_x(t_1 r)}
  \frac{1}{\left( 1 + \tilde{t}_1^2 d(x_1,y)^2 \right)^2} \,dV_g(y) \leq C r^2
  \frac{t_1^2}{t_2^4} \qquad  \hbox{ for all } x \in \Sigma,
$$
which gives the inequality \eqref{eq-scale}.

We now prove \eqref{eq-scale2}. Using again that $\left(1 + \tilde{t}_2^2 d(x_2,y)^2\right)^2 \leq \frac{C}{t_2^4}$ we have that
\begin{equation}\label{equa}
    \int_{\Sigma \setminus B_{x_1}(R t_1)} e^{\varphi(y)} \,dV_g(y)
  \leq \frac{C}{t_2^4} \int_{\Sigma \setminus B_{x_1}(R t_1)}
  \frac{1}{\left( 1 + \tilde{t}_1^2 d(x_1,y)^2 \right)^2} \,dV_g(y).
\end{equation}
Finally, using normal geodesic coordinates centered at $x_1$ and \eqref{tot} with a change of variable, we find
$$
   \lim_{t_1 \rightarrow 0^+} t_1^{-2} \int_{\Sigma \setminus B_{x_1}(R t_1)}
  \frac{1}{\left( 1 + \tilde{t}_1^2 d(x_1,y)^2 \right)^2} \,dV_g = o_R(1)
  \qquad \hbox{ as } R \to + \infty.
$$
This fact and \eqref{equa}, with the estimate \eqref{estimate}, conclude the proof of the \eqref{eq-scale2}, by choosing $R$ sufficiently large, depending on $\varepsilon, \delta$ and $\Sigma$.
\end{proof}

\begin{rem}
The same result holds if we consider $e^{-\varphi}$, interchanging the indices of $t_1$ and $t_2$. 
\end{rem}

We next present a crucial step in describing the topology of low sublevels, which will allow us to find a min-max scheme later on.

\begin{prop}\label{hom} 
Let $L > 0$ be so large that $\Psi\bigr(\{ I_{\rho_1,\rho_2} \leq - L \}\bigr) \in X$,
and let $\nu$ be so small that $I_{\rho_1,\rho_2}(\varphi_{(\vartheta_1, \vartheta_2)}) < - L$ for $(\vartheta_1, \vartheta_2) \in \mathcal{X}_{\nu}$. Let $R_{\nu}$ be the retraction given in Lemma \ref{retr}. Then the map $T_\nu : \mathcal{X}_{\nu} \rightarrow \mathcal{X}_{\nu}$ defined as
$$
   T_\nu\bigr((\vartheta_1, \vartheta_2)\bigr) = R_{\nu} \bigr(\Psi(\varphi_{(\vartheta_1, \vartheta_2)})\bigr)
$$
is homotopic to the identity on $\mathcal{X}_{\nu}$.
\end{prop}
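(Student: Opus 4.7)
The plan is to show that $\Psi(\varphi_{(\vartheta_1, \vartheta_2)})$ is quantitatively close, in the ambient space $\overline{\Sigma}_\delta \times \overline{\Sigma}_\delta$, to $(\vartheta_1, \vartheta_2)$, and then use this closeness to construct a straight-line homotopy which stays inside $X$, finally composing with the retraction $R_\nu$ of Lemma \ref{retr}.

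First I would analyze $\psi(f_1)$ with $f_1 = e^\varphi/\int_\Sigma e^\varphi\,dV_g$. By the concentration estimates of Lemma \ref{scale1} combined with Lemma \ref{integral}, all but an $\varepsilon$-fraction of the mass of $f_1$ sits in $B_{x_1}(C t_1)$, while $\int_{B_x(r t_1)} f_1\,dV_g$ is of order $r^2$. Plugging these facts into the defining equation \eqref{def-sigma}, one sees that $\sigma(x_1, f_1)$ is of order $t_1$, with comparability constants depending only on $R$, $\delta$ and $\Sigma$, and that $\sigma(y, f_1)$ is bounded below by a fixed multiple of $t_1$ for every $y \in \Sigma$. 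Hence $\sigma(f_1)$ is comparable to $t_1$. Similarly the vector $\eta(f_1)$ lies inside $B_{x_1}(C t_1) \subset U$, so that $\beta(f_1) = P(\eta(f_1))$ satisfies $d(\beta(f_1), x_1) \leq C t_1$. By the remark following Lemma \ref{scale1}, the analogous statements hold for $f_2 = e^{-\varphi}/\int_\Sigma e^{-\varphi}\,dV_g$, with $(x_2, t_2)$ in place of $(x_1, t_1)$.

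Writing $\Psi(\varphi_{(\vartheta_1, \vartheta_2)}) = ((x_1', t_1'), (x_2', t_2'))$, the previous step yields, uniformly on $\mathcal{X}_\nu$, the bounds $d(x_i', x_i) \leq C t_i$ and $c^{-1} t_i \leq t_i' \leq c\, t_i$ for a constant $c = c(R, \delta, \Sigma) > 1$. Because on $\mathcal{X}_\nu$ one has either $d(x_1, x_2) \geq \delta^2/2$ or $|t_1 - t_2| \geq \delta^3$, and because $\min\{t_1, t_2\} \leq \nu \ll \delta$, choosing $\nu$ sufficiently small guarantees that the straight-line segment
\[
s \longmapsto \bigl((\gamma_1(s), (1-s) t_1 + s t_1'),\, (\gamma_2(s), (1-s) t_2 + s t_2')\bigr), \qquad s \in [0, 1],
\]
where $\gamma_i$ is a minimizing geodesic from $x_i$ to $x_i'$, remains uniformly bounded away from the diagonal $\overline{D}_\delta$ and therefore inside $X$. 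Setting
\[
H(s, (\vartheta_1, \vartheta_2)) = R_\nu\bigl((1-s)(\vartheta_1, \vartheta_2) + s\, \Psi(\varphi_{(\vartheta_1, \vartheta_2)})\bigr),
\]
interpreted as above, defines a continuous map $H : [0, 1] \times \mathcal{X}_\nu \to \mathcal{X}_\nu$ with $H(0, \cdot) = \mathrm{Id}$ and $H(1, \cdot) = T_\nu$.

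The main technical point is to verify that this interpolation respects the cone identification in $\overline{\Sigma}_\delta$: on the piece $\mathcal{X}_{\nu, 2}$, where one of the $t_i$'s equals $\delta$, the corresponding entry $\vartheta_i$ is the cone point, and one must show that $\psi(f_i)$ is compatible with this. This follows from the observation that if $t_i \geq \delta$ then $e^\varphi$ (respectively $e^{-\varphi}$) has lost its concentration, so that $\sigma(f_i) \geq \delta$ and $\psi(f_i)$ is again the cone point; if only $\max\{t_1, t_2\} = \delta$ while $\min\{t_1, t_2\} \in [\nu^2, \nu]$, the estimate on the other factor still holds, and the homotopy is performed componentwise in the cone. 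Once these compatibilities are checked, continuity of $H$ and avoidance of $\overline{D}_\delta$ yield the desired homotopy between $T_\nu$ and the identity on $\mathcal{X}_\nu$.
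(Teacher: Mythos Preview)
Your overall strategy coincides with the paper's: you correctly identify the key estimates $d(\beta_i, x_i) \leq C t_i$ and $C^{-1} t_i \leq \sigma_i \leq C t_i$ (these are exactly relations \eqref{relation} in the paper, derived from Lemma \ref{scale1} in the same way), and you then build a homotopy in $X$ between $\Psi(\varphi_{(\vartheta_1,\vartheta_2)})$ and $(\vartheta_1,\vartheta_2)$, composing with $R_\nu$.

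The difference lies in the construction of the homotopy. You use a single-stage interpolation $\bigl(\gamma_i(s),(1-s)t_i+s\sigma_i\bigr)$, whereas the paper splits it into three stages $H_1,H_2,H_3$ passing through an auxiliary scale $\kappa_i=\min\{\delta,\sigma_i/\sqrt{\nu}\}$: first deform $\sigma_i\to\kappa_i$ keeping the center at $\beta_i$, then move $\beta_i\to x_i$ along a geodesic keeping the scale $\kappa_i$, finally deform $\kappa_i\to t_i$ keeping the center at $x_i$.

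Your handling of the cone identification has a genuine gap, and this is precisely what the paper's three-stage construction is designed to fix. The assertion ``if $t_i\ge\delta$ then $\sigma(f_i)\ge\delta$'' is not justified by the estimates you invoke: Lemma \ref{scale1} only gives $\sigma_i\ge t_i/C$ with $C=C(\delta,\Sigma)$ possibly large, so $t_i=\delta$ does not force $\sigma_i\ge\delta$. More seriously, even granting this at $t_i=\delta$, your one-step interpolation is not continuous near the boundary of $\mathcal{X}_{\nu,2}$: for $t_i$ slightly below $\delta$ the geodesic $\gamma_i$ depends on $x_i$, and as $(x_i,t_i)$ approaches the cone point along different $x_i$'s the interpolated curve at a fixed $s\in(0,1)$ (whose scale $(1-s)t_i+s\sigma_i$ can stay strictly below $\delta$) lands at different points of $\overline{\Sigma}_\delta$. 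In addition, whenever $\sigma_i\ge\delta$ the center $\beta_i$ is undefined, so the geodesic $\gamma_i$ makes no sense. The paper's intermediate scale $\kappa_i$ resolves all of this at once: whenever $\kappa_i=\delta$ the geodesic step $H_2$ takes place at the cone point and is trivial, while $\kappa_i<\delta$ forces $\sigma_i<\sqrt{\nu}\,\delta$ and hence $t_i\le C\sqrt{\nu}\,\delta\ll\delta$, so that $\beta_i$ and $x_i$ are both well defined and close, making $H_2$ meaningful and continuous.
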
 

\begin{proof}
Let us denote $\vartheta_i= (x_i, t_i)$ and
$$
f_1= \frac{e^{\varphi_{(\vartheta_1, \vartheta_2)}}}{\int_{\Sigma} e^{\varphi_{(\vartheta_1, \vartheta_2)}} \,dV_g}, \quad \psi
(f_1)=(\beta_1, \sigma_1),
$$ 
$$
f_2= \frac{e^{-\varphi_{(\vartheta_1, \vartheta_2)}}}{\int_{\Sigma} e^{-\varphi_{(\vartheta_1, \vartheta_2)}} \,dV_g}, \quad \psi
(f_2)=(\beta_2, \sigma_2),
$$ 
where $\psi$ is given in Proposition \ref{conc}. First, observe that we have the following relations
\begin{equation}\label{relation} 
\frac{1}{C} \leq \frac{\sigma_i}{t_i} \leq C, \qquad  d \left( \beta_i , x_i
    \right) \leq C t_i,
\end{equation}
for some constant $C=C(\delta,\Sigma)>0$, depending only on $\Sigma$ and $\delta$. Indeed, by \eqref{eq-scale2}, we have that
\[ 
\sigma\left(x_i, f_i \right) \leq C t_i,
\]
where $\sigma(x,f)$ is the continuous map defined in \eqref{def-sigma}. From that, we get that \mbox{$\sigma_i \leq C t_i$}. Moreover, by \eqref{eq-scale}, we get the relation $t_i \leq C \sigma_i$.

Next, by \eqref{dist} and using again the fact that $\sigma(x_i, f) \leq C t_i$, we obtain that
\[
d \bigr(x_i, S\left(f_i \right) \bigr )\leq C t_i,
\]
where $S(f)$ is the set defined in \eqref{defS}. But since we have the inequality
\[
d\bigr(\beta_i, S\left(f_i \right) \bigr) \leq C \sigma_i,
\]
we can conclude the proof of \eqref{relation}. 

We are now able to prove the proposition. The proof will follow by taking into account a composition of three homotopies. The first deformation $H_1$ is defined in the following way:
$$
  \Biggr( \left(
    \begin{array}{c}
      (\beta_1, \sigma_1) \\
      (\beta_2, \sigma_2) \\
    \end{array}
  \right), s \Biggr) \;\; 
  \stackrel{ H_1}{\longmapsto} \;\;  \left(
                                           \begin{array}{c}
                                           \bigr( \beta_1,\
                                     (1-s) \sigma_1 + s \kappa_1 \bigr) \\
                                   \\
                    
                      \bigr( \beta_2,\ (1-s) \sigma_2 + s \kappa_2 \bigr)
                                \end{array}
                                \right),
$$
where $\kappa_i= \min \left\{ \delta, \frac{\sigma_i}{\sqrt{\nu}} \right\}$.

We introduce now a second deformation $H_2$, given by
$$
  \Biggr( \left(
    \begin{array}{c}
      (\beta_1, \kappa_1) \\
      (\beta_2, \kappa_2) \\
    \end{array}
  \right), s \Biggr) \;\; 
  \stackrel{ H_2}{\longmapsto} \;\;  \left(
                                     \begin{array}{c}
                             \bigr( (1-s)\beta_1 + s x_1, \ \kappa_1 \bigr) \\
                                   \\
                                   
                             \bigr( (1-s)\beta_2 + s x_2,\ \kappa_2 \bigr)
                                \end{array}
                                \right),
$$
where $(1-s)\beta_i + s x_i$ stands for the geodesic joining $\beta_i$ and $x_i$ in unit time. Observe that, if $\kappa_i < \delta$, then we have that $\sigma_i < \sqrt{\nu} \delta$. Therefore by choosing $\nu$ small enough, we have that $\beta_i$ and $x_i$ are close to each other, by \eqref{relation}.
Instead, if $\kappa_i= \delta$, the equivalence relation in $\overline{\Sigma}_\delta$ makes the above deformation a trivial identification.

We perform a third deformation $H_3$ defined by
$$
  \Biggr( \left(
    \begin{array}{c}
      (x_1, \kappa_1) \\
      (x_2, \kappa_2) \\
    \end{array}
  \right), s \Biggr) \;\; 
  \stackrel{ H_3}{\longmapsto} \;\;  \left(
                                \begin{array}{c}
                                 \bigr( x_1,\ (1-s) \kappa_1 + s t_1 \bigr) \\
                                   \\
                                 \bigr( x_2,\ (1-s) \kappa_2 + s t_2 \bigr)
                                \end{array}
                               \right).
$$
Finally, we define $H$ as the composition of these three homotopies. Then,
$$ 
\bigr((\vartheta_1, \vartheta_2), s\bigr) \mapsto R_{\nu} \circ H\bigr(\Psi(\varphi_{(\vartheta_1, \vartheta_2)}),s\bigr)
$$ 
gives us the desired homotopy to the identity. Indeed, we observe that, since $\nu \ll \delta$, $H(\Psi(\varphi_{(\vartheta_1,\vartheta_2)}),s)$ always belongs to $X$, so that $R_{\nu}$ can be applied.
\end{proof}

\bigskip

We now introduce the min-max scheme which provides existence of solutions for equation \eqref{eq}. The argument follows the ideas of \cite{djlw}, which have been used extensively (see for instance \cite{djadli,dm,zhou}).

Let $\overline{\mathcal{X}}_{\nu}$ be the topological cone over $\mathcal{X}_{\nu}$, which can be represented as
\[
   \overline{\mathcal{X}}_{\nu} = \bigr(\mathcal{X}_{\nu} \times [0,1]\bigr) \Bigr/ \bigr(\mathcal{X}_{\nu} \times \{ 1 \}\bigr)
\] 
where the equivalence relation identifies all the points in $\mathcal{X}_{\nu} \times \{ 1 \}$. We choose $L > 0$ so large that $I_{\rho_1, \rho_2}(u) \leq - L$ implies that $\Psi(u)\in X$ and then $\nu$ so small that
\[
  I_{\rho_1,\rho_2}(\varphi_{(\vartheta_1, \vartheta_2)}) \leq - 4L
\]
uniformly for $(\vartheta_1, \vartheta_2) \in \mathcal{X}_{\nu}$. The existence of such $\nu$ is guaranteed by Proposition \ref{low}. Fixing this value of $\nu$, we define the following class:
\begin{equation}\label{class}
    \mathscr{H} = \Bigr\{ h : \overline{\mathcal{X}}_{\nu} \rightarrow H^1(\Sigma) \; : \; h \hbox{ is continuous and } h\bigr(\cdot\, \times\, \{0\}\bigr)
 = \varphi_{(\vartheta_1, \vartheta_2)} \hbox{ on } \mathcal{X}_{\nu} \Bigr\}.
\end{equation}
Then we have the following properties.

\begin{lem}\label{minmax-str}
The set $\mathscr{H}$ is non-empty and moreover, letting
\[
  c_{\rho_1,\rho_2} = \inf_{h \in \mathscr{H}}\; \sup_{m \in \overline{\mathcal{X}}_{\nu}} I_{\rho_1,\rho_2}\bigr(h(m)\bigr),
\]
one has that \,$c_{\rho_1,\rho_2} > - 2 L$.
\end{lem}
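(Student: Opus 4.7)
The plan is to verify that $\mathscr{H}$ is non-empty by exhibiting an explicit element, and then to prove the strict inequality $c_{\rho_1,\rho_2} > -2L$ by contradiction, combining the homotopical compatibility furnished by Proposition \ref{hom} with the non-contractibility of $\mathcal{X}_\nu$.

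For non-emptiness I would take the straight-line deformation to the zero function,
\[
    h_0\bigr((\vartheta_1,\vartheta_2),t\bigr) = (1-t)\,\varphi_{(\vartheta_1,\vartheta_2)},\qquad (\vartheta_1,\vartheta_2)\in\mathcal{X}_\nu,\ t\in[0,1].
\]
At $t=1$ this takes the constant value $0 \in H^1(\Sigma)$ independently of $(\vartheta_1,\vartheta_2)$, so $h_0$ factors continuously through the quotient defining $\overline{\mathcal{X}}_\nu$. The continuity of $(\vartheta_1,\vartheta_2)\mapsto \varphi_{(\vartheta_1,\vartheta_2)}$ as a map into $H^1(\Sigma)$ is immediate from the explicit formula \eqref{test} and the continuity of the piecewise definition of $\tilde t_i(t_i)$, so $h_0\in\mathscr{H}$ and at $t=0$ it reduces to $\varphi_{(\vartheta_1,\vartheta_2)}$ as required.

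For the lower bound I would argue by contradiction: suppose $c_{\rho_1,\rho_2}\leq -2L$. Then for some $h\in\mathscr{H}$ one has $\sup_{m\in\overline{\mathcal{X}}_\nu} I_{\rho_1,\rho_2}\bigr(h(m)\bigr) \leq -\tfrac{3}{2}L < -L$. By the choice of $L$, this forces $\Psi\circ h$ to take values in $X$, and applying the retraction $R_\nu$ of Lemma \ref{retr} yields a continuous map
\[
    \Phi := R_\nu \circ \Psi \circ h : \overline{\mathcal{X}}_\nu \longrightarrow \mathcal{X}_\nu.
\]
On the base $\mathcal{X}_\nu\times\{0\}$ of the cone, $h$ agrees with $\varphi_{(\vartheta_1,\vartheta_2)}$, so $\Phi$ restricted there equals the map $T_\nu$ of Proposition \ref{hom}, which is homotopic to $\mathrm{id}_{\mathcal{X}_\nu}$. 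On the other hand, the existence of a continuous extension of this restriction to the whole cone $\overline{\mathcal{X}}_\nu$ (which is itself contractible) shows that $T_\nu$ is null-homotopic in $\mathcal{X}_\nu$. Composing these two homotopies produces a null-homotopy of $\mathrm{id}_{\mathcal{X}_\nu}$, i.e.\ $\mathcal{X}_\nu$ is contractible. This contradicts the non-contractibility of $\mathcal{X}_\nu$, which is homotopy equivalent to the non-contractible space $X$ via the retraction of Lemma \ref{retr}.

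The main obstacle here is conceptual rather than computational: one must ensure that the quantitative choices of $L$ and $\nu$ interlock correctly so that the two constraints — the sublevel inclusion $\{I_{\rho_1,\rho_2}\leq -L\}\subseteq \Psi^{-1}(X)$ and the uniform upper bound $I_{\rho_1,\rho_2}(\varphi_{(\vartheta_1,\vartheta_2)})\leq -4L$ on $\mathcal{X}_\nu$ — leave a wide enough window between $-4L$ and $-L$ to allow the choice $\sup I_{\rho_1,\rho_2}(h(m))\leq -\tfrac{3}{2}L$ above and the resulting topological contradiction to close. Both constraints have already been set up in the paragraph preceding the definition \eqref{class}, invoking Propositions \ref{mt-improved} and \ref{low}, so the proof is essentially the topological step sketched above.
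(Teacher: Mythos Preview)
Your proof is correct and follows essentially the same approach as the paper: exhibit the linear interpolation to $0$ as an element of $\mathscr{H}$, and obtain the lower bound by contradiction via the composition $R_\nu\circ\Psi\circ h$, which would null-homotope $T_\nu\simeq\mathrm{id}_{\mathcal{X}_\nu}$ against the non-contractibility of $\mathcal{X}_\nu$. The only cosmetic differences are your choice of threshold $-\tfrac{3}{2}L$ instead of $-L$ and your parametrization $(1-t)\varphi$ rather than $s\,\varphi$ (yours in fact matches the boundary condition in \eqref{class} more transparently).
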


\begin{proof}
To prove that $\mathscr{H} \neq \emptyset$, we just notice that the map
\begin{equation} \label{test-map}
  \bar{h}(\vartheta,s) = s \,\varphi_{(\vartheta_1, \vartheta_2)},
  \qquad  (\vartheta,s) \in \overline{\mathcal{X}}_{\nu},
\end{equation}
belongs to $\mathscr{H}$. Assuming by contradiction that $c_{\rho_1,\rho_2}\leq -2L$ there would  exist a map $h \in \mathscr{H}$ with $\sup_{m \in \overline{\mathcal{X}}_{\nu}} I_{\rho_1,\rho_2}\bigr(h(m)\bigr) \leq - L$. Then, since Proposition \ref{hom} applies, writing $m = (\vartheta, t)$, with $\vartheta \in \mathcal{X}_{\nu}$, the map
\[
  t \mapsto R_{\nu} \circ \Psi \circ h(\cdot,t)
\]
would be a homotopy in $\mathcal{X}_{\nu}$ between $R_{\nu} \circ \Psi \circ \varphi_{(\vartheta_1, \vartheta_2)}$ and a constant map. But this is impossible since $\mathcal{X}_{\nu}$ is non-contractible (see the Remark \ref{non-contr} and by the fact that $\mathcal{X}_{\nu}$ is a retract of $X$) and since $R_{\nu} \circ \Psi \circ \varphi_{(\vartheta_1, \vartheta_2)}$ is homotopic to the identity on $\mathcal{X}_{\nu}$. Therefore we deduce the proof of the lemma.
\end{proof}

\begin{rem}\label{non-contr}
In \cite{mal-ruiz} the authors proved that the set $X = \overline{\Sigma}_{\delta} \times \overline{\Sigma}_{\delta} \setminus \overline{D}_{\delta}$ is non-contractible. Indeed, if $\Sigma=\mathbb{S}^2$, then $\overline{\Sigma}_{\delta}$ can be identified with $B_0(1) \subset \mathbb{R}^3$ and it turns out that $X\simeq \mathbb{S}^2$, where $\simeq$ stands for homotopical equivalence. The case of positive genus is not so easy. However, the authors proved that $X$ is non-contractible by showing that its cohomology group $H^4(X)$ is non trivial. 
\end{rem}

From the Lemma \ref{minmax-str}, the functional $I_{\rho_1,\rho_2}$ has a min-max structure. By classical arguments, such a structure yields a Palais-Smale sequence. However, we cannot directly conclude the existence of a critical point, since it is not known whether the Palais-Smale condition holds or not. To bypass this problem and get the conclusion, we need a different argument, usually taking the name `monotonicity argument'. This technique was first introduced by Struwe in \cite{struwe}, and than used in more general settings (see for example \cite{jean,djlw}).

\smallskip

Let us take $\mu > 0$ such that $\Lambda_i := [\rho_i-\mu, \rho_i+\mu]$ is contained in $(8 \pi, 16 \pi)$ for both $i = 1, 2$. We then consider $\tilde{\rho}_i \in \Lambda_i$ and the functional $I_{\tilde{\rho}_1,\tilde{\rho}_2}$ corresponding to these values of the parameters.

It is easy to check that the above min-max scheme applies uniformly for
$\tilde{\rho}_i \in \Lambda_i$ for $\nu$ sufficiently small. More precisely, given any large number $L > 0$, there exists $\nu$ so small that for $\tilde{\rho}_i \in \Lambda_i$ we have the gap:
\begin{equation}\label{eq:min-max}
   \sup_{m \in \partial
   \overline{\mathcal{X}}_{\nu}} I_{\tilde{\rho}_1,\tilde{\rho}_2}(m) < - 4 L; \qquad
    c_{\tilde{\rho}_1,\tilde{\rho}_2} := \inf_{h \in \mathscr{H}}
  \; \sup_{m \in \overline{\mathcal{X}}_{\nu}} I_{\tilde{\rho}_1,\tilde{\rho}_2}\bigr(h(m)\bigr) > -
  2L, 
\end{equation}
where $\mathscr{H}$ is defined in \eqref{class}. Moreover, using for
example the test map \eqref{test-map}, one shows that for $\mu$ sufficiently small there exists a large constant $\overline{L}$ such that for $\tilde{\rho}_i \in \Lambda_i$
\[
  c_{\tilde{\rho}_1,\tilde{\rho}_2} \leq \overline{L} 
\]
Under these conditions, the following proposition is well-known.

\begin{prop}\label{bounded-ps}
Let $\nu$ be so small that \eqref{eq:min-max} holds. Then the functional $I_{t \rho_1,t\rho_2}$ possesses a bounded Palais-Smale sequence $(u_n)_n$ at level $c_{\,t \rho_1,t\rho_2}$ for almost every $t \in \Gamma:=\left[ 1 - \frac{\mu}{16 \pi}, 1 + \frac{\mu}{16 \pi}\right]$.
\end{prop}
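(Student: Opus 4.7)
The plan is to apply the classical Struwe monotonicity trick, which is the standard tool when the Palais--Smale condition is not available. First I would observe that $I_{t\rho_1,t\rho_2}$ is affine in the parameter $t>0$: writing
\[
I_{t\rho_1,t\rho_2}(u) = A(u) + t\,F(u), \qquad A(u) = \tfrac{1}{2}\!\int_\Sigma|\nabla_g u|^2\, dV_g,
\]
\[
F(u) = -\rho_1 \log\!\int_\Sigma h_1 e^u\, dV_g - \rho_2 \log\!\int_\Sigma h_2 e^{-u}\, dV_g + (\rho_1-\rho_2)\!\int_\Sigma u\, dV_g,
\]
and noting that $A(u)\geq 0$, the function $t\mapsto \sup_{m\in\overline{\mathcal{X}}_\nu} I_{t\rho_1,t\rho_2}(h(m))/t$ is non-increasing in $t$ for every fixed $h\in\mathscr{H}$. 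Since the pointwise infimum of a family of non-increasing functions is non-increasing, it follows that $\tilde c(t) := c_{t\rho_1,t\rho_2}/t$ is non-increasing on $\Gamma$.

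By Lebesgue's theorem on differentiability of monotone functions, $\tilde c$, and hence also $c(t)=t\,\tilde c(t)$, is differentiable at almost every $t_0\in\Gamma$; these will be exactly the parameters for which the theorem is proved. At such a $t_0$ I would fix sequences $t_n\uparrow t_0$ and $t_n'\downarrow t_0$ and, for each $n$, maps $h_n,h_n'\in\mathscr{H}$ realizing the min-max almost optimally, e.g.
\[
\sup_{m\in\overline{\mathcal{X}}_\nu} I_{t_n\rho_1,t_n\rho_2}(h_n(m)) \;\leq\; c_{t_n\rho_1,t_n\rho_2} + (t_0-t_n),
\]
and analogously for $h_n'$. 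Using the affine identity $I_{t_0\rho_1,t_0\rho_2}(u)-I_{t_n\rho_1,t_n\rho_2}(u) = (t_0-t_n)F(u)$ together with $\sup_m I_{t_0\rho_1,t_0\rho_2}(h_n(m))\geq c_{t_0\rho_1,t_0\rho_2}$, any near-maximizer $u_n:=h_n(m_n)$ of $I_{t_0\rho_1,t_0\rho_2}\circ h_n$ satisfies a uniform bound on $F(u_n)$ (a lower bound from $t_n\uparrow t_0$, an upper bound from $t_n'\downarrow t_0$), with the precise constants dictated by $c'(t_0)$. Reinserting these estimates into $A(u_n)+t_n F(u_n)\leq c_{t_n\rho_1,t_n\rho_2}+(t_0-t_n)$ produces a uniform bound on $A(u_n)$. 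Since $I_{\rho_1,\rho_2}$ is invariant under translations by constants, one can restrict to the zero-average subspace, where Poincar\'e's inequality upgrades the gradient bound into a uniform $H^1$ bound on $(u_n)_n$.

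Having produced a bounded near-maximizing sequence with $I_{t_0\rho_1,t_0\rho_2}(u_n)\to c_{t_0\rho_1,t_0\rho_2}$, the final step is to extract a genuine bounded Palais--Smale sequence at that level. This is standard: if no such sequence existed, a quantitative deformation lemma applied in a bounded tubular neighbourhood of the sublevel $\{I_{t_0\rho_1,t_0\rho_2}\leq c_{t_0\rho_1,t_0\rho_2}+\varepsilon_n\}$ would yield a continuous flow strictly lowering the energy by a definite amount; composing $h_n$ with this flow would give a competitor in $\mathscr{H}$ violating the min-max definition of $c_{t_0\rho_1,t_0\rho_2}$, a contradiction. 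The main obstacle is the middle paragraph: converting one-sided differentiability of $\tilde c$ into a quantitative $H^1$ bound for the near-maximizers requires choosing the approximate minimizers $h_n$ and the approximate maximizers $m_n$ simultaneously along a diagonal sequence, and this is the genuine content of Struwe's trick in the doubly supercritical framework at hand. Once the bound is secured, the invariance by additive constants and the deformation argument are routine.
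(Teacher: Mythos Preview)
The paper does not give a proof of this proposition: it simply declares the result ``well-known'' after explaining that one needs Struwe's monotonicity argument, and refers to \cite{struwe}, \cite{jean}, \cite{djlw} for the technique. Your proposal is precisely the standard Struwe monotonicity trick that those references contain, so in that sense your approach coincides with what the paper has in mind.

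Your outline is correct. The decomposition $I_{t\rho_1,t\rho_2}(u)=A(u)+tF(u)$ with $A\ge 0$ is exactly the structure that makes the trick work, and the monotonicity of $t\mapsto c_{t\rho_1,t\rho_2}/t$ follows as you say. One small simplification: you do not actually need two-sided sequences $t_n\uparrow t_0$ and $t_n'\downarrow t_0$. At a point of differentiability of $\tilde c(t)=c_{t\rho_1,t\rho_2}/t$, a single one-sided sequence $t_n\uparrow t_0$ already gives a uniform bound on $A(u_n)$ directly via the identity
\[
A(u)=\frac{t_0\,I_{t_n\rho_1,t_n\rho_2}(u)-t_n\,I_{t_0\rho_1,t_0\rho_2}(u)}{t_0-t_n},
\]
applied to near-maximizers $u=h_n(m_n)$ of $I_{t_0\rho_1,t_0\rho_2}\circ h_n$ with $h_n$ almost optimal for $c_{t_n\rho_1,t_n\rho_2}$; the right-hand side is bounded in terms of $\tilde c'(t_0)$. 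The restriction to zero-average functions and the concluding deformation argument are, as you note, routine.
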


Using the above result we are now able to prove the Theorem \ref{main}.

\medskip

\begin{proof}[Proof of Theorem \ref{main}.]
The existence of a bounded Palais-Smale sequence for the functional $I_{t \rho_1,t\rho_2}$ implies by standard arguments that the functional possesses a critical point. Let now consider $t_j \rightarrow 1$, $t_j \in \Gamma$ and let $(u_j)_j$ denote the corresponding solutions. It is then sufficient to apply the compactness result in Theorem \ref{thm-comp}, which yields convergence of $(u_j)_j$ to a solution $u$ of \eqref{eq}, by the fact that $\rho_1, \rho_2$ are not multiples of $8 \pi$.
\end{proof}

\end{document}